\let\chapter\section
\renewcommand{\thefootnote}{$\dagger$} 
\newtheorem{theorem}{Theorem}
\newtheorem{lemma}[theorem]{Lemma}
\theoremstyle{remark}
\newtheorem{remark}{Remark}[section]
\def\1{\ensuremath{\mathrm{1}\hspace{-.35em} \mathrm{1}}} 
\def\E{\mathbb{E}}
\def\R{\mathbb{R}}
\def\cF{\mathcal{F}}
\def\cB{\mathcal{B}}
\def\cE{\mathcal{E}}
\def\cG{\mathcal{G}}
\def\cN{\mathcal{N}}
\def\cO{\mathcal{O}}
\def\cS{\mathcal{S}}
\renewcommand{\hat}{\widehat}
\renewcommand{\epsilon}{\varepsilon}
\renewcommand{\leq}{\leqslant}
\renewcommand{\geq}{\geqslant}
\def\mystrut(#1,#2){\vrule height #1pt depth #2pt width 0pt} 
\DeclareMathOperator*{\argmin}{arg\,min}
\DeclareMathOperator{\Reg}{Reg}
\DeclareMathOperator{\Risk}{Risk}
\DeclareMathOperator{\Err}{Err}
\newcommand{\pushright}[1]{\ifmeasuring@#1\else\omit\hfill$\displaystyle#1$\fi\ignorespaces}
\newcommand{\pushleft}[1]{\ifmeasuring@#1\else\omit$\displaystyle#1$\hfill\fi\ignorespaces}
\begin{document}

\title{Sparse Accelerated Exponential Weights}

\author[1]{Pierre~Gaillard\thanks{pierre@gaillard.me}}
\author[1,2]{Olivier~Wintenberger\thanks{wintenberger@math.ku.dk}}
\affil[1]{University of Copenhagen, Denmark}
\affil[2]{Sorbonne Universités, UPMC Univ Paris 06, F-75005, Paris, France}



    \maketitle

\begin{abstract}
We consider the stochastic optimization problem where a convex function is minimized observing recursively the gradients.  We introduce SAEW, a new procedure that accelerates exponential weights procedures with the slow rate $1/\sqrt T$ to procedures achieving the fast rate $1/T$. Under the strong convexity of the risk, we achieve the optimal rate of convergence for approximating sparse parameters in $\R^d$. The acceleration is achieved by using successive  averaging steps in an online fashion. The procedure also produces sparse estimators thanks to additional hard threshold steps.
\end{abstract}

\section{Introduction}

Stochastic optimization procedures have encountered more and more success in the past few years. This common framework includes machine learning methods minimizing the empirical risk.  \citet{LeCunBottou2004}  emphasized the utility of  Stochastic Gradient Descent (SGD) procedures compared with batch procedures; the lack of accuracy in the optimization is balanced by the robustness of the procedure to any random environment. \citet{Zinkevich2003} formalized this robustness property by proving a $d/\sqrt T$ rate of convergence  in any possible convex environment for a $d$-dimensional parametric bounded space. This rate is optimal with no additional condition. However, under strong convexity of the risk,  accelerated SGD procedures achieve the fast rate $d/T$, that is also optimal \citet{Agarwal2012}. One of the most popular acceleration procedure is obtained by a simple averaging step, see \citet{PolyakJudisky1992} and \citet{BachMoulines2013}. Other robust and adaptive procedures using exponential weights have  been studied in the setting of individual sequences by \citet{Cesa-BianchiLugosi2006}. The link with the stochastic optimization problem has been done in \citet{KivinenWarmuth1997}, providing in the $\ell_1$-ball algorithms with an optimal logarithmic dependence on the dimension $d$ but a slow rate $1/\sqrt T$. The fast rate $\log(T)$ on the regret has been achieved in some strongly convex cases as in Theorem 3.3 of \citet{Cesa-BianchiLugosi2006}. Thus, the expectation of the risk of their averaging, studied under the name of progressive mixture rule by \citet{Catoni2004}, also achieves the fast rate $\log(T)/T$. However, progressive mixture rules do not achieve the fast rate with high probability, see \citet{Audibert2008} and their complexity is prohibitive (of order $d^T$). The aim of this paper is to propose an efficient acceleration of  exponential weights procedures that achieves the fast rate $1/T$ with high probability. 

In parallel, optimal rates of convergence for the risk were provided by \citet{BuneaTsybakovWegkamp2007} in the sparse setting. When the optimal parameter $\theta^\ast$ is of dimension $d_0=\|\theta^\ast\|_0$ smaller than the dimension of the parametric space $d$, the optimal rate of convergence is
${d_0\log(d)}/T$. Such fast rates can be achieved for polynomial time algorithm only up to the multiplicative factor $\alpha^{-1}$ where $\alpha$ is the strong convexity constant of the risk, see \citet{Zhang2014}.  For instance, the Lasso procedure achieves this optimal rate for least square linear regression, see Assumption (A3) (implying strong convexity of the risk) of \citet{BuneaTsybakovWegkamp2007}. Other more robust optimal batch procedures such as $\ell_0$ penalization or exploration of the parametric space suffer serious complexity drawbacks and are known to be NP-hard.  Most of the stochastic algorithms do not match this rate, with the exception of SeqSEW (in expectation only), see \citet{Gerchinovitz2011a}. As the strong convexity constant $\alpha$ does not appear in the bounds of \citet{Gerchinovitz2011a}, one suspects that the algorithm is NP-hard. 


The aim of this paper is to provide the first acceleration of  exponential weights procedures achieving the optimal rate of convergence ${d_0\log(d)}/(\alpha T)$ in the identically and independently distributed (i.i.d.) online optimization setting with sparse solution~$\theta^\ast$.  The acceleration is obtained by localizing the exponential weights around their averages in an online fashion.  The idea is that the averaging alone suffers too much from the exploration of the entire parameter space. The sparsity is achieved by an additional hard-truncation step, producing sparse approximations of the optimal parameter~$\theta^\ast$. The acceleration procedure is not computationally hard as its complexity is $\cO(dT)$. We obtain theoretical optimal bounds on the risk similar to the Lasso for random design, see \citet{BuneaTsybakovWegkamp2007}. We also obtain optimal bounds on the cumulative risk of the exploration of the parameter space.\\

The paper is organized as follows. After some preliminaries in Section \ref{sec:prel}, we present our acceleration procedure and we prove that it achieves the optimal rate of convergence in Section \ref{sec:known_parameters}. We refine the constants for least square linear regression  in Section \ref{sec:squareloss}. Finally, we give some simulations in Section \ref{sec:as}.

\section{Preliminaries}\label{sec:prel}

    We consider a sequence $\ell_t:\R^d\to \R, t\geq 1$ of i.i.d. random loss functions. We define the instantaneous risk as $\E[\ell_t]:\theta\mapsto\E[\ell_t(\theta)]$\footnote{Because the losses are i.i.d, the risk does not depend on $t\geq 1$. However,  we still use the time index in the notation to emphasize that a quantity indexed by $s \geq 1$ cannot depend on $\ell_t$ for any $t > s$. The notation $\E[\ell_t](\hat \theta_{t-1})$ denotes  $\E\big[\ell_t(\hat \theta_{t-1})\big|\ell_1,\dots,\ell_{t-1}\big]$.}. 
    We assume that the risk is $(2\alpha)$-strongly convex, i.e., for all $\theta_1,\theta_2\in \R^d$
    \begin{multline}
    \label{ass:strong_convexity}
        \tag{SC} \raisetag{3em}
        \E\big[\ell_t(\theta_1) - \ell_t(\theta_2)\big] \leq \E\big[\nabla\ell_t(\theta_1)\big]^\top (\theta_1-\theta_2) \\
             - \alpha \big\|\theta_1-\theta_2 \big\|_2^2\,.
    \end{multline}
    The (unique) risk minimizer in $\R^d$ is denoted $\theta^\ast$ and its effective dimension is $\|\theta^\ast\|_0 \leq d_0$. We insist on the fact that the strong convexity  is only required on the risk and not on the loss function. This condition is satisfied for many non strongly convex loss functions such as the quantile loss (see Section \ref{sec:as}) and necessary  to obtain fast rates of convergence (see \cite{Agarwal2012}). 

     \paragraph{Online optimization setting} For each $t\geq 1$, we provide two parameters $(\smash{\hat \theta_{t-1},\tilde \theta_{t-1}) \in \R^d\times \R^d}$ having observed the past gradients of the first parameter $\smash{\nabla \ell_s(\hat \theta_{s-1}) \in \R^d}$ for $s\leq t-1$ only.  
     
Our aim is to  provide high-probability upper-bounds on the cumulative excess risk (also called cumulative risk for simplicity) of the sequence~$(\hat \theta_{t-1})$ and on the instantaneous excess risk of~$\tilde \theta_{t-1}$:
     \begin{itemize}
        \item \emph{Cumulative risk:} the online exploration vs. exploitation problem aims at minimizing the cumulative risk of the  sequence $\smash{(\hat \theta_{t-1})}$ defined as
    \begin{equation}
        \label{eq:cumrisk}
        \Risk_{1:T}(\hat \theta_{0:(T-1)}) := \sum_{t=1}^T \Risk(\hat \theta_{t-1}) \,,
    \end{equation}
    where $\Risk(\theta) := \E[\ell_t](\theta) - \E[\ell_t](\theta^\ast)$ is the instantaneous excess risk.
     This goal is useful in a predictive scenario when the observation of $\nabla \ell_t(\hat \theta_{t-1})$ comes at the cost of $\smash{\Risk(\hat \theta_{t-1})}$.
        \item \emph{Instantaneous excess risk}: simultaneously, at any time $t\geq 1$, we provide an estimator $\tilde \theta_{t-1}$ of $\theta^\ast$ that minimizes the instantaneous risk. This problem has been widely studied in statistics and the known solutions are mostly batch algorithms. Under the strong convexity of the risk, a small instantaneous risk ensures in particular that $\tilde \theta_{t-1}$ is close in $\ell_2$-norm to the true parameter $\theta^\ast$ (by Lemma~\ref{lem:strong_convex}, Appendix~\ref{app:lemme1}).
    \end{itemize}

    To make a parallel with the multi-armed bandit setting, minimizing the cumulative risk is related to minimizing the cumulative regret. In contrast, the second goal is related to simple regret (see~\cite{Bubeck2009}): the cost of exploration only comes in terms of resources (time steps $T$) rather than of costs depending on the exploration. 

    By convexity of the risk, the averaging
    $
        \smash{\bar \theta_{T-1}} :=$ $\smash{ (1/T) \sum_{t=1}^{T} \hat \theta_{t-1}}
    $
    has an instantaneous risk upper-bounded by the cumulative risk
    \begin{equation}
        \label{eq:jensen}
        \Risk(\bar \theta_{T-1})  \leq \Risk_{1:T}(\hat \theta_{0:(T-1)}) / T \,.
    \end{equation}
    Therefore, upper bounds on the cumulative risk lead to upper bounds on the instantaneous risk for $\tilde \theta_{T-1} = \bar \theta_{T-1}$. However, we will provide another solution to build $\tilde \theta_{T-1}$ with better guarantees than the one obtained by~\eqref{eq:jensen}.

    On the contrary, since each $\tilde \theta_{t-1}$ minimizes the instantaneous risk at time $t$, it is tempting to use them in the exploration vs. exploitation problem. However, it is impossible in our setting as the parameters $(\tilde \theta_{t})$ are constructed upon the observation of the gradients $\nabla \ell_s(\hat \theta_{s-1})$, $s<t$.  Remark that our bounds on the cumulative risk are optimal as of the same order than $\sum_{t=1}^T \Risk(\tilde \theta_{t-1})$.

    Our main contribution (see Theorems~\ref{thm:SAEW} and~\ref{thm:cumulativerisk}) is to introduce a new acceleration procedure that simultaneously ensures (up to loglog terms) both optimal risk for $\smash{\tilde \theta_{t-1}}$ and optimal cumulative risk for $\smash{(\hat \theta_{t-1})}$. Up to our knowledge, this is the first polynomial time online procedure that recovers the minimax rate obtained in a sparse strongly convex setting. 
    Its instantaneous risk achieves the optimal rate of convergence 
    \begin{equation}
    \min\left\{\frac{B^2d_0\log(d)}{\alpha T},UB\sqrt{\frac{\log(d)}T}\right\},
    \label{eq:rate}
    \end{equation}
    where $B\geq \sup_{\theta: \|\theta\|_1\leq 2U} \|\nabla\ell_t(\theta)\|_\infty$ is an almost sure bound on the gradients,
 \begin{equation}
        \label{ass:small_diameter}
        \big\|\theta^\ast\|_1 \leq U  \quad \text{and} \quad \big\|\theta^\ast\|_0 \leq d_0 \,.
    \end{equation}

    For least square linear regression (see Theorem~\ref{thm:squareloss}), $B^2$ is replaced in~\eqref{eq:rate} with a term of order $\sigma^2:= \E[\ell_t(\theta^\ast)]$. In the batch setting, the Lasso achieves a similar rate under the slightly stronger Assumption (A3) of \citet{BuneaTsybakovWegkamp2007}.

    \section{Acceleration procedure for known parameters}
    \label{sec:known_parameters}

     We propose SAEW (described in Algorithm~\ref{alg:SAEW}) that depends on the parameters $(d_0,\alpha,U,B)$ and performs an optimal online optimization in the $\ell_1$ ball of radius $U$. SAEW accelerates a convex optimization subroutine (see Algorithm~\ref{alg:subroutine}). If the latter achieves a slow rate of convergence on its cumulative regret, SAEW achieves a fast rate of convergence on its cumulative and instantaneous risks. We describe first what is expected from the subroutine.

    \subsection{Convex optimization in the $\ell_1$-ball with a slow rate of convergence}

Assume that a generic subroutine (Algorithm~\ref{alg:subroutine}), denoted by $\cS$,  performs online convex optimization into the $\ell_1$-ball $\cB_1 \big(\theta_{\mathrm{center}},\epsilon \big) := \big\{\theta \in \R^d: \|\theta - \theta_{\mathrm{center}}\|_1 \leq \epsilon\big\}$ of center $\theta_{\mathrm{center}}\in \R^d$ and radius $\epsilon >0$. Centers and radii will be settled online thanks to SAEW.

    \begin{algorithm}
    \caption{Subroutine $\cS$: convex optimization in $\ell_1$-ball}
    \label{alg:subroutine}
    \begin{minipage}{0.45\textwidth}
    {\bfseries Parameters:} $B>0$, $t_\mathrm{start} > 0$, $\theta_{\mathrm{center}} \in \R^d$  and $\epsilon >0$.\\[5pt]
    For each  $t = t_{\mathrm{start}}, t_{\mathrm{start}}+1, \dots$, 
    \begin{itemize}
        \item predict $\hat \theta_{t-1} \in \cB_1(\theta_{\mathrm{center}},\epsilon)$ (thanks to some online gradient procedure)
        \item suffer loss $\ell_t\big(\hat \theta_{t-1}\big) \in \R$ and observe the gradient $\nabla \ell_t\big(\hat \theta_{t-1}\big) \in \R^d$
    \end{itemize}
    \end{minipage} 
    \end{algorithm}

     We assume that the subroutine $\cS$ applied on any sequence of convex sub-differentiable losses $(\ell_{t})_{t \geq t_{\mathrm{start}}}$ satisfies the following upper-bound on its cumulative  regret: for all $t_{\mathrm{end}} \geq t_{\mathrm{start}}$ and for all $\theta \in \cB_1\big(\theta_{\mathrm{center}},\epsilon\big)$
    \begin{equation}
        \label{eq:reg_online_optimization}
        \sum_{t=t_{\mathrm{start}}}^{t_{\mathrm{end}}}  \ell_t(\hat \theta_{t-1}) -   \ell_t(\theta) 
        \leq a \epsilon \sqrt{\sum_{t_{\mathrm{start}}}^{t_{\mathrm{end}}} \big\|\nabla \ell_t(\hat \theta_{t-1})\big\|_\infty^2 } 
        + b\epsilon B\,,
    \end{equation}
    for some non-negative constants $a,b$ that may depend on the dimension $d$.

    Several online optimization algorithms do satisfy the regret bound~\eqref{eq:reg_online_optimization} while being totally tuned, see for instance~\citet[Corollary 2.1]{Gerchinovitz2011} or \citet{CesaBianchiMansourStoltz2007,GaillardStoltzEtAl2014,Wintenberger2014}. 
    The regret bound is satisfied for instance with\renewcommand{\thefootnote}{$\ddagger$} \footnote{As in the rest of the paper, the sign $\lesssim$ denotes an inequality which is fulfilled up to multiplicative constants.} $\smash{a \lesssim \sqrt{\log d}}$ and $b \lesssim \log d$ by a well online-calibrated Exponentiated Gradient (EG) forecaster combining the corners of $\smash{\cB_1\big(\theta_{\mathrm{center}},\epsilon\big)}$. This logarithmic dependence on the dimension is crucial here and possible because the optimization is performed in the $\ell_1$-ball.  SGD optimizing in the $\ell_2$-ball, such as RDA of~\citet{Xiao2010}, suffer a linear dependence on $d$. Therefore, they cannot be used as subroutines.

    The regret bound yields the slow rate of convergence $\smash{\cO\big(\sqrt{(\log d)(t_{\mathrm{end}} - t_{\mathrm{start}})}\big)}$ (with respect to the length of the session) on the cumulative risk. Our acceleration procedure provides a generic method to also achieve a fast rate under sparsity.

    \subsection{The acceleration procedure}

\begin{algorithm}[th!]
    \caption{SAEW}
    \label{alg:SAEW}
    \begin{minipage}{.45\textwidth}
    {\bfseries Parameters:} $d_0 \geq 1$, $\alpha >0$, $U > 0$, $B>0$, $\delta>0$ and a subroutine $\cS$ that satisfies~\eqref{eq:reg_online_optimization} \\[5pt]
    {\bfseries Initialization:} $t_0 = t = 1, \epsilon_0 = U$ and $\bar \theta_{0} = 0$ \\[5pt]
    For each  $i = 0,1,\dots$ 
    \begin{itemize}
      \item define $\smash{[\bar \theta_{t_{i}-1}]_{d_0}}$ by rounding to zero the $d-d_0$ smallest coefficients of $\smash{\bar \theta_{t_{i}-1}}$
      \item start a new instance $\cS_i$ of the subroutine $\cS$ with parameters $t_{\mathrm{start}} = t_i$, $\smash{\theta_{\mathrm{center}} = [\bar \theta_{t_i-1}]_{d_0}}$, $\epsilon = U 2^{-i/2}$ and $B$, 
      \item for $t = t_i, t_i +1, \dots$ and while $\epsilon_{t-1} > U 2^{-(i+1)/2}$
      \begin{itemize}
        \item forecast $\hat \theta_{t-1}$ by using the subroutine $\cS_i$
        \item observe $\nabla \ell_t(\hat \theta_{t-1})$
        \item update the bound
        \[
            \Err_t := a_i' \sqrt{\sum_{s=t_i}^{t} \big\|\nabla\ell_s(\hat \theta_{s-1})\big\|_\infty^2} + b_i' B
        \]
        with $a_i'$ and $b_i'$ resp. defined in~\eqref{eq:defa} and~\eqref{eq:defb}.
        \item update the confidence radius 
          \[
            \epsilon_{t} := 2\sqrt{\frac{2d_0 U 2^{-i/2}}{\alpha(t-t_i+ 1)}  \Err_t}
          \] 
        \item update the averaged estimator 
        \[
           \textstyle \bar \theta_{t} :=  (t-t_i+1)^{-1} \sum_{s=t_i}^t \hat \theta_{s-1}
        \]
        \item update the estimator
        \[
            \tilde \theta_{t} := \bar \theta_{\argmin_{0\leq s \leq t} \epsilon_s}
        \]
      \end{itemize}
      \item stop the instance $\mathcal{S}_i$ and define $t_{i+1} := t+1$ 
    \end{itemize}
    \end{minipage}
    \end{algorithm}

    Our acceleration procedure (SAEW, described in Algorithm~\ref{alg:SAEW}) performs the subroutine $\cS$ on sessions of adaptive length  optimizing in exponentially decreasing $\ell_1$-balls. The sessions are indexed by $i \geq 0$ and denoted $\cS_i$. The algorithm defines in an online fashion a sequence of starting times $1 = t_0 < t_1 < \dots $ such that the instance $\cS_{i}$ is used to perform predictions between times $t_{\mathrm{start}}=t_i$ and $t_{\mathrm{end}}=t_{i+1}-1$. The idea is that our accuracy in the estimation of $\theta^\ast$ increases over time so that $\cS_i$ can be a localized optimization subroutine in a small ball $\smash{\cB_1\big([\bar \theta_{t_{i}-1}]_{d_0}, U 2^{-i/2}\big)}$ around the current sparse estimator $[\bar \theta_{t_{i}-1}]_{d_0}$ of $\theta^\ast$ at time $t_i$, see Algorithm~\ref{alg:SAEW} for the definition of $[\bar \theta_{t_{i}-1}]_{d_0}$. 

    The cumulative risk suffered during each session will remain constant: the increasing rate $\smash{\big(\sum_{t_i}^{t_{i+1}-1} \big\|\nabla \ell_t(\hat \theta_{t-1})\big\|_\infty^2 \big)^{1/2}} \leq B \sqrt{t_{i+1}-t_i}$ due to the length of the session (see Equation \eqref{eq:reg_online_optimization}) will be shown to be of order $2^{i/2}$. But it will be offset by the decreasing radius $\epsilon=U2^{-i/2}$. 

    By using a linear-time subroutine $\cS$, the global time and storage complexities of SAEW are also $\cO(dT)$.

    \smallskip
    Our main theorem is stated below. It controls the excess risk of the instantaneous estimators of SAEW. The proof is deferred to Appendix~\ref{proof:SAEW}.

    \begin{theorem}
        \label{thm:SAEW}
Under Assumption~\eqref{ass:strong_convexity}, SAEW satisfies with probability at least $1-\delta$, $0<\delta<1$, for all $T\geq 1$
        \begin{align*}
            \Risk\big(\tilde \theta_{T} \big) 
             \leq 
            \min & \bigg\{  UB \left(a'\sqrt{\frac{2}{T}} + \frac{4b'}{T}\right) + \frac{\alpha U^2}{8d_0T}, \\
            & {\frac{d_0 B^2}{\alpha} \left(\frac{2^7 a'^2}{T} + \frac{2^{11} b'^2}{T^2}\right) + \frac{2\alpha U^2}{d_0 T^2}
            \bigg\}\,,}
        \end{align*}
        where $a' = a + \sqrt{6 \log(1 + 3 \log T) - 2 \log \delta }$ and $b' = b  +  1/2 + 3 \log(1+3\log t) - \log \delta$.
    \end{theorem}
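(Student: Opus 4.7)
The plan is to prove the bound by induction on the session index $i \geq 0$, maintaining the high-probability invariant
\[
    \theta^* \in \cB_1\!\bigl([\bar \theta_{t_i-1}]_{d_0},\;U 2^{-i/2}\bigr),
\]
so that the subroutine's regret bound~\eqref{eq:reg_online_optimization} may be applied with comparator $\theta = \theta^*$. The base case $i=0$ is immediate from $\|\theta^*\|_1 \leq U$ and the initialization $\bar\theta_0 = 0$. For the induction step, set $\epsilon_i := U 2^{-i/2}$ and work inside session $i$.

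Inside the session I would proceed in three moves. First, linearize via strong convexity~\eqref{ass:strong_convexity} and convert the deterministic regret bound~\eqref{eq:reg_online_optimization} into a high-probability bound on the cumulative excess risk: the martingale difference $\ell_s(\hat\theta_{s-1}) - \ell_s(\theta^*) - \E[\ell_s(\hat\theta_{s-1}) - \ell_s(\theta^*)]$, controlled via a Bernstein/Freedman-type inequality with empirical quadratic variation $\sum\|\nabla\ell_s(\hat\theta_{s-1})\|_\infty^2$, yields $\sum_{s=t_i}^{t}\Risk(\hat\theta_{s-1}) \leq \epsilon_i\,\Err_t$ with the coefficients $a_i',b_i'$ exactly absorbing the resulting log and $-\log\delta$ terms. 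Second, by Jensen's inequality~\eqref{eq:jensen}, $\Risk(\bar\theta_t) \leq \epsilon_i \Err_t/(t-t_i+1)$. Third, the strong-convexity lemma (Lemma~\ref{lem:strong_convex}) gives $\|\bar\theta_t - \theta^*\|_2^2 \leq \Risk(\bar\theta_t)/\alpha$; together with the standard hard-threshold inequality $\|[\bar\theta_t]_{d_0}-\bar\theta_t\|_2 \leq \|(\bar\theta_t-\theta^*)_{S^c}\|_2$ (where $S$ is the support of $\theta^*$) and $\|v\|_1 \leq \sqrt{2 d_0}\|v\|_2$ for $v$ supported on $\leq 2d_0$ coordinates, I obtain
\[
    \bigl\|[\bar\theta_t]_{d_0} - \theta^*\bigr\|_1 \leq 2\sqrt{2 d_0\,\Risk(\bar\theta_t)/\alpha} \leq 2\sqrt{\frac{2 d_0\,\epsilon_i\,\Err_t}{\alpha(t-t_i+1)}} = \epsilon_t.
\]
The session-termination condition $\epsilon_t \leq U 2^{-(i+1)/2} = \epsilon_{i+1}$ therefore closes the induction.

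For the final statement, since $\tilde\theta_T = \bar\theta_{s^*}$ with $s^* = \argmin_{s\leq T}\epsilon_s$, the same chain gives $\Risk(\tilde\theta_T) \leq \alpha\,\epsilon_{s^*}^2/(8 d_0)$, reducing the theorem to an estimate of $\min_{s\leq T}\epsilon_s$. The two terms in the minimum correspond to two regimes. The \emph{slow-rate} term follows by evaluating $\epsilon_s$ at $s=T$ inside whichever session contains $T$ and using the worst-case $\|\nabla\ell_s\|_\infty \leq B$ in $\Err_T \leq a'B\sqrt{T} + b'B$ together with the trivial bound $\epsilon_i \leq U$, which yields $\Risk(\tilde\theta_T) \leq UB(a'\sqrt{2/T} + 4 b'/T)$ plus the residual $\alpha U^2/(8d_0 T)$ that arises when bounding $\epsilon_i$ by its initial value. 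The \emph{fast-rate} term follows from the session boundaries: $\epsilon_{t_{i+1}-1} \leq U 2^{-(i+1)/2}$ together with the session-length lower bound $t_{i+1}-t_i \gtrsim \alpha U^2 2^{-i}/(d_0 \Err)$ (solving the inequality defining $\epsilon_t$) yields, after geometric summation, $2^{-I}\lesssim d_0 \Err_T^2/(\alpha U^2 T)$ for the latest session $I$, producing the $d_0 B^2(a'^2/T + b'^2/T^2)/\alpha$ scaling and the $2\alpha U^2/(d_0 T^2)$ boundary slack.

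The main obstacle is that $\Err_t$ is built from the random, data-dependent quadratic variation $\sum\|\nabla\ell_s(\hat\theta_{s-1})\|_\infty^2$, so the Bernstein step in move one is self-referential. I would handle it by a peeling argument over dyadic scales of this variation (or an empirical-Bernstein inequality), combined with a union bound over all times $t\leq T$ and over the at most $O(\log T)$ sessions, each allotted confidence of order $\delta/\mathrm{polylog}(T)$. This dyadic peeling and union bound is exactly what produces the $\sqrt{6\log(1+3\log T) - 2\log\delta}$ and $3\log(1+3\log t) - \log\delta$ contributions in the definitions of $a'$ and $b'$.
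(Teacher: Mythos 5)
Your overall architecture coincides with the paper's: the same induction on the invariant $\|\theta^\ast-[\bar\theta_{t_i-1}]_{d_0}\|_1\leq U2^{-i/2}$, the same regret-to-risk martingale conversion with self-normalized variance (the paper packages this as Theorem~\ref{thm:riskregret}, built on an adaptive learning rate rather than explicit dyadic peeling, but the effect and the $\log\log T$ and $-\log\delta$ contributions to $a',b'$ are the same), the same Jensen + strong-convexity + hard-threshold chain giving $\|[\bar\theta_t]_{d_0}-\theta^\ast\|_1\leq 2\sqrt{2d_0\Risk(\bar\theta_t)/\alpha}=\epsilon_t$, and the same reduction of the theorem to bounding $\min_{s\leq T}\epsilon_s$ via $\Risk(\tilde\theta_T)\leq\alpha\min_{s\leq T}\epsilon_s^2/(8d_0)$.

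There is, however, a concrete problem in your fast-rate step. To conclude $2^{-I}\lesssim \gamma^2 a'^2/T$ for the session $I$ containing $T$ (with $\gamma=2^4d_0B/(\alpha U)$), you must show that \emph{many} sessions have elapsed by time $T$, which requires an \emph{upper} bound on each session length; a lower bound $t_{j+1}-t_j\gtrsim\cdots$ pushes $I$ in the wrong direction and cannot close the argument. Moreover the formula you state, $t_{i+1}-t_i\gtrsim \alpha U^2 2^{-i}/(d_0\Err)$, is not what solving the threshold condition gives (the session length scales like $2^{i/2}d_0\Err/(\alpha U)$, essentially its reciprocal up to powers of $U$ and $2^{-i}$), and plugging your claimed consequence $2^{-I}\lesssim d_0\Err_T^2/(\alpha U^2T)$ with $\Err_T\approx a'B\sqrt T$ into $\alpha U^2 2^{-I}/(8d_0)$ yields a constant, not $1/T$. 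The paper's Lemma~\ref{lem:epsilont} gets the correct ingredient from the fact that the session did \emph{not} terminate one step earlier, i.e.\ $\epsilon_{t_{j+1}-2}>U2^{-(j+1)/2}$, combined with $\Err_{t-1}\leq B(a_j'\sqrt{t-t_j}+b_j')$; solving the resulting quadratic in $\sqrt{T_j-1}$ gives $T_j\leq 1+2^j\gamma^2a_j'^2+2^{j/2}\gamma b_j'$, and summing geometrically and solving a second quadratic in $2^{(i+1)/2}$ yields $2^{-(i+1)/2}\leq\gamma a_i'/\sqrt{t_{i+1}}+\sqrt2(1+2\gamma b_i')/t_{i+1}$. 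You should also make the slow-rate case explicit: for $T$ lying in a late session, $\epsilon_T$ evaluated inside that session can be large (when $T-t_i+1$ is small), so one must compare against $\epsilon_{t_1-1}$; the paper handles this with a three-case analysis around a splitting time $\tau$. These are repairable within your plan, but as written the fast-rate derivation does not go through.
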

    \begin{remark}
    Using EG as the subroutines, the main term of the excess risk becomes of order 
    \begin{equation}   
        \label{eq:ratethetatilde}
       \Risk\big(\tilde \theta_{T} \big)  =  \cO_T\bigg(\frac{d_0B^2}{\alpha T}\log \Big(\frac{d \log T}{\delta}\Big)\bigg) \,.
    \end{equation}
    \end{remark}

    \begin{remark}  From the strong convexity assumption, Theorem~\ref{thm:SAEW} also ensures that, with probability $1-\delta$, the  estimator $\tilde \theta_T$ is close enough to $\theta^\ast$:
    \[
        \big\|\tilde  \theta_T - \theta^\ast\big\|_2 \lesssim \frac{ \sqrt{d_0} B}{\alpha  \sqrt{T} } \sqrt{a'^2\log_2 T + \frac{b'^2}{T}+ \frac{\alpha U^2}{d_0 T}} \,.
    \]  
    \end{remark}

    \begin{theorem}
        \label{thm:cumulativerisk}
        Under the assumptions and the notation of Theorem~\ref{thm:SAEW}, the cumulative risk of SAEW is upper-bounded with probability at least $1-\delta$ as
        \begin{multline*}
          \Risk_{1:T}(\hat\theta_{0:(T-1)}) 
            \leq  \min\bigg\{ 4UB(a' \sqrt{T} + b' + 1), \\
            \frac{ 2^5 d_0 B^2}{\alpha } a'^2 \log_2 T + 4UB(1+b') + \frac{\alpha U^2 }{8 d_0} \bigg\} \,.
        \end{multline*}
    \end{theorem}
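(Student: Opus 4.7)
The plan is to reuse the high-probability event $E$ built in the proof of Theorem~\ref{thm:SAEW} (Appendix~\ref{proof:SAEW}): $\P(E) \geq 1-\delta$, and on $E$ the localization is correct at every session, i.e., for each $i \geq 0$ one has $\theta^\ast \in \cB_1\bigl([\bar\theta_{t_i-1}]_{d_0}, \epsilon_i\bigr)$ with $\epsilon_i = U 2^{-i/2}$. Working on $E$, I would decompose the cumulative risk session by session,
\[
\Risk_{1:T}(\hat\theta_{0:T-1}) = \sum_{i} R_i \quad \text{with} \quad R_i := \sum_{t=t_i}^{t_{i+1}-1} \Risk(\hat\theta_{t-1}) \,,
\]
and apply the subroutine's regret guarantee \eqref{eq:reg_online_optimization} inside each session with comparator $\theta = \theta^\ast$ (legitimate on $E$). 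Converting per-step loss differences into per-step risk differences via the Azuma-type concentration already absorbed into the constants $a', b'$ of Theorem~\ref{thm:SAEW} yields, uniformly over $i$,
\[
R_i \;\leq\; \epsilon_i\, \Err_{t_{i+1}-1} \;\leq\; a'\epsilon_i B \sqrt{N_i} + b'\epsilon_i B, \qquad N_i := t_{i+1}-t_i \,.
\]

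For the \emph{slow-rate branch}, I would sum this estimate directly. Cauchy-Schwarz gives $\sum_i 2^{-i/2}\sqrt{N_i} \leq \sqrt{\sum_i 2^{-i}}\sqrt{\sum_i N_i} \leq \sqrt{2 T}$, while $\sum_i 2^{-i/2} \leq (1-1/\sqrt{2})^{-1} < 4$. Plugging $\epsilon_i = U 2^{-i/2}$ back in, and separately handling the initial round (for which $\bar\theta_0 = 0$ and $\|\theta^\ast\|_1 \leq U$ give the trivial bound $\Risk(\hat\theta_0) \leq UB$), yields the first branch $4UB(a'\sqrt T + b' + 1)$ of the minimum.

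For the \emph{fast-rate branch}, the idea is to exploit the stopping rule twice. At the end of session $i$, the inequality $\epsilon_{t_{i+1}-1} \leq U 2^{-(i+1)/2}$ together with the definition of $\epsilon_t$ produces
\[
R_i \;\leq\; \epsilon_i\, \Err_{t_{i+1}-1} \;\leq\; \frac{\alpha U^2 N_i}{16\, d_0\, 2^i} \,.
\]
One step earlier, the reverse inequality $\epsilon_{t_{i+1}-2} > U 2^{-(i+1)/2}$ combined with $\Err_{t_{i+1}-2} \leq a' B \sqrt{N_i-1} + b' B$ gives the matching upper bound $N_i \lesssim 2^i d_0^2 a'^2 B^2 / (\alpha U)^2$, so substituting back yields a \emph{session-independent} bound $R_i \lesssim d_0 a'^2 B^2 / \alpha$ plus an additive $O(UB b')$ term stemming from the $b'$-part of $\Err$. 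Inserting the trivial lower bound $\Err_t \geq b' B$ into the stopping condition further forces session lengths to grow geometrically, $N_i \gtrsim 2^{i/2} d_0 b' B / (\alpha U)$, so the total number of sessions reaching time $T$ satisfies $I \lesssim \log_2 T$. Summing the per-session bound over these $I$ sessions yields $\sum_i R_i \lesssim (d_0 B^2 / \alpha) a'^2 \log_2 T + O(UB(1+b'))$, with the residual $\alpha U^2 / (8 d_0)$ absorbing the boundary contribution of the last (possibly truncated) session.

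The hard part is the simultaneous use of the stopping condition to obtain both an upper bound on each $N_i$ (which makes each $R_i$ small) and a lower bound on $N_i$ (which caps the total number $I$ of sessions), all while the concentration constants $a', b'$ have to survive a union bound over sessions without spoiling the $\log_2 T$ scaling in the final bound.
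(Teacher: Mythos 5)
Your decomposition and both main estimates match the paper's proof: the per-session bound $R_i \leq U2^{-i/2}\Err_{(t_{i+1}-1)\wedge t}$ coming from~\eqref{eq:risk}, the slow-rate summation over $i$ (the paper bounds $\sqrt{N_i}\leq\sqrt{t}$ and uses $\sum_i 2^{-i/2}\leq 4$ where you use Cauchy--Schwarz to get $\sqrt{2T}$; both land on $4UB(a'\sqrt{T}+b'+1)$), and the fast-rate chain $R_i\leq \alpha U^2 2^{-i}T_i/(16 d_0)$ combined with the upper bound~\eqref{eq:Ti} on $T_i$ extracted from $\epsilon_{t_{i+1}-2}> U2^{-(i+1)/2}$. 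The one place you genuinely diverge is how the number of contributing sessions is capped at $O(\log_2 T)$. You derive a \emph{lower} bound $N_i\gtrsim 2^{i/2}\gamma b_i'$ from $\Err_t\geq b_i'B$ and the stopping rule, and conclude geometric growth of session lengths; the paper instead never lower-bounds $N_i$ at all, but splits the sum at $i=\lfloor 2\log_2 t\rfloor$ and controls the tail $i>\lfloor 2\log_2 t\rfloor$ with the trivial convexity bound $\sum_{s}\Risk(\hat\theta_{s-1})\leq UB2^{-i/2}t$, whose sum over the tail is at most $4UB$ (this is also where the residual $\alpha U^2/(8d_0)$ arises, from $\sum_i 2^{-i}\cdot 1\leq 2$ inside the head of the sum, not from a truncated last session as you suggest). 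Your route is workable but has a soft spot the paper's sidesteps: the algorithm explicitly allows empty sessions ($T_j=0$, acknowledged in the proof of Lemma~\ref{lem:epsilont}), for which your lower bound on $N_i$ fails, and if $\gamma b'<1$ the geometric-growth argument does not immediately yield $I\lesssim\log_2 T$. Empty sessions contribute zero risk so this is repairable, but you would need to argue it, whereas the head/tail split makes the issue moot; I would adopt the paper's tail-splitting device for that step.
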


    \begin{remark} 
    Using EG as the subroutines, we get a cumulative risk of order
    \[
       \Risk_{1:T}(\hat\theta_{0:(T-1)})   =  \cO_T\bigg(\frac{d_0B^2}{\alpha T}\log \Big(\frac{d \log T}{\delta}\Big)\log T\bigg) \,.
    \]
    The averaged cumulative risk bound has an additional factor $\log T$ in comparison to the excess risk of $\tilde \theta_T$. This logarithmic factor is unavoidable. Indeed, at time $t$, the rate stated in Equation~\eqref{eq:ratethetatilde} is optimal for any estimator. An optimal rate for the cumulative risk can thus be obtained by summing this rate of order $\cO(1/t)$ over $t$ introducing the log factor.
    \end{remark}

    \begin{remark} \label{rem:unbounded_gradients}
    Adapting Corollary 13 of \citet{Gerchinovitz2011a},  the  boundedness of  $\nabla \ell_t$ can be weakened to unknown $B$ under the subgaussian condition. The price of this adaptation is a multiplicative factor of order $\log(dT)$ in the final bounds. 
    \end{remark}

\begin{remark}
Using the strong convexity property, the averaging of SAEW has much faster rate ($\log T/T$ on the excess risk) than the averaging of the EG procedure itself (only slow rate $1/\sqrt T$ with high probability, see \cite{Audibert2008}). But the last averaging $\tilde \theta_T$ achieves the best rate overall. Also note the difference of the impact of the $\ell_1$-ball radius $U$ on the rates: for the overall average $\bar\theta_T$ it is $U^2/T$ whereas it is $U^2/T^2$ for the last averaging $\tilde \theta_T$. On the contrary to the overall averaging, the last averaging forgets the cost of the exploration of the initial $\ell_1$-ball.\end{remark}

    \section{Square linear regression}
    \label{sec:squareloss}

    Consider the common least square linear regression  setting. Let $(X_t,Y_t)$, $t\geq 1$ be i.i.d. random pairs taking values in $\R^d \times \R$. For simplicity, we assume that $\|X_t\|_\infty \leq X$ and $|Y_t|\leq Y$ almost surely for some  constants $X,Y>0$. We aim at estimating linearly the conditional mean of $Y_t$ given $X_t$, by approaching $\theta^\ast = \argmin_{\theta \in \R^d} \E\big[(Y_t-X_t^\top \theta)^2\big]$. Notice that the strong convexity of the risk is equivalent to the positivity of the covariance matrix of $X_t$ as $\smash{\alpha \leq \lambda_{\min}\big(\E\big[X_tX_t^\top]\big)}$, where $\lambda_{\min}$ is the smallest eigenvalue.
    
    Applying the previous general setting to the square loss function 
      $
          \ell_t: \theta \mapsto (Y_t -  X_t^\top \theta)^2 \,,
      $
    we get the following Theorem~\ref{thm:squareloss}. It improves upon Theorem~\ref{thm:SAEW} the factor $B^2$ in the main term into a factor $X^2 \sigma^2$, where $\sigma^2 := \E\big[(Y_t - X_t^\top \theta^\ast)^2\big]$ is the expected loss of the best linear predictor. This is achieved without the additional knowledge of $\sigma^2$.
    The proof of the theorem is highly inspired from the one of Theorem~\ref{thm:SAEW} and is deferred to Appendix~\ref{proof:squareloss}.

    \begin{theorem}
        \label{thm:squareloss}
        SAEW tuned with $B = 2X\big(Y+2XU\big)$ satisfies with probability at least $1-\delta$ the bound
    \begin{multline*}
         \hspace*{-2ex} \Risk(\tilde \theta_T) \lesssim    \min \bigg\{  U X \left(\frac{ \sigma a'}{\sqrt{T}} + \frac{ (Y+XU) c '}{T}\right) + \frac{\alpha U^2}{d_0T}, \\
             {\frac{ X^2 d_0 }{\alpha} \left(\frac{\sigma^2 a'^2}{T} + \frac{(Y+XU)^2 c'^2}{T^2}\right) + \frac{\alpha U^2}{d_0 T^2}
            \bigg\}\,,}
    \end{multline*}
for all $T\geq 1$, where $a' \lesssim a + \sqrt{\log(1/\delta) + \log \log T}$ and $b' \lesssim b + \log(1/\delta) + \log \log T$.
    \end{theorem}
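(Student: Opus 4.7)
I would derive Theorem~\ref{thm:squareloss} as a refinement of the proof of Theorem~\ref{thm:SAEW}: the session-and-peeling architecture of SAEW is kept intact, and only the stochastic control of $\sum_{s=t_i}^t \|\nabla\ell_s(\hat\theta_{s-1})\|_\infty^2$ that feeds $\mathrm{Err}_t$ is sharpened using the structure of the square loss. Since $\nabla\ell_t(\theta)=-2X_t(Y_t-X_t^\top\theta)$, $\|X_t\|_\infty\le X$ and $(u+v)^2\le 2u^2+2v^2$, one has
\[
\|\nabla\ell_t(\hat\theta_{t-1})\|_\infty^2\le 8X^2(Y_t-X_t^\top\theta^\ast)^2+8X^2\bigl(X_t^\top(\hat\theta_{t-1}-\theta^\ast)\bigr)^2.
\]
Along the $\ell_1$-balls explored by SAEW one has $\|\hat\theta_{t-1}\|_1\le 2U$, hence $|Y_t-X_t^\top\hat\theta_{t-1}|\le Y+2XU$, so the choice $B=2X(Y+2XU)$ validates hypothesis~\eqref{eq:reg_online_optimization}. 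In particular Theorem~\ref{thm:SAEW} applies verbatim and already provides the slow-rate side of the $\min$, so it suffices to improve the fast side.

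The two pieces above are then concentrated separately. The first sum $\sum_s (Y_s-X_s^\top\theta^\ast)^2$ is i.i.d.\ with mean $\sigma^2$ and range at most $(Y+XU)^2$, so Bernstein's inequality yields a high-probability bound of order $\sigma^2 n + \sigma(Y+XU)\sqrt{n\log\delta^{-1}}+(Y+XU)^2\log\delta^{-1}$ with $n=t-t_i+1$. For the second sum I use the identity $\E[(X_s^\top(\theta-\theta^\ast))^2\mid \cF_{s-1}]=\Risk(\hat\theta_{s-1})$ that is specific to the square loss, together with Freedman's inequality applied to the martingale increments $(X_s^\top(\hat\theta_{s-1}-\theta^\ast))^2-\Risk(\hat\theta_{s-1})$, which are bounded by $(2XU)^2$. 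This gives $\sum_s(X_s^\top(\hat\theta_{s-1}-\theta^\ast))^2\lesssim R_{i,t}+XU\sqrt{R_{i,t}\,n\log\delta^{-1}}+(XU)^2\log\delta^{-1}$, where $R_{i,t}=\sum_{s=t_i}^t\Risk(\hat\theta_{s-1})$ is the session cumulative risk. Inserting the two estimates into $\mathrm{Err}_t$ via $\sqrt{a+b}\le\sqrt a+\sqrt b$ produces a variance proxy driven by $X^2\sigma^2$ instead of $B^2$, plus a lower-order term of size $X^2(Y+XU)^2$.

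The confidence radius $\epsilon_t$ now satisfies an implicit inequality because the right-hand side still contains $R_{i,t}$. Using the strong-convexity step from the proof of Theorem~\ref{thm:SAEW} one has $R_{i,t}\lesssim n\,\alpha\epsilon_t^2/d_0$; substitution turns the bound into an inequality of the form $R\lesssim A+\sqrt{AR}$, solved by $R\lesssim A$ with $A$ proportional to $d_0X^2\sigma^2/\alpha$ plus the lower-order $(Y+XU)$-term. Propagating this across the doubling sessions $i$, taking the $\min$ with the generic bound of Theorem~\ref{thm:SAEW}, and applying the same session-wise union bound yields the stated inequality, with the $\log\log T$ factor from Theorem~\ref{thm:SAEW} surviving in $a'$ and $c'$. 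The main obstacle is the control of the martingale sum: because $\hat\theta_{s-1}$ is adapted rather than independent, classical Bernstein is unavailable, and the Freedman step must be coupled with a peeling argument over the unknown value of $R_{i,t}$ in order to keep the deviation term of order $\log\log T$ and not $\log T$.
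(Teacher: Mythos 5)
Your plan is viable and would give the stated bound, but the key step---sharpening the control of $\Err_t$ from $B\sqrt{t-t_i}$ to $X\sigma\sqrt{t-t_i}$ plus lower-order terms---is done by a genuinely different route than the paper's. You split $\|\nabla\ell_t(\hat\theta_{t-1})\|_\infty^2\lesssim X^2(Y_t-X_t^\top\theta^\ast)^2+X^2\bigl(X_t^\top(\hat\theta_{t-1}-\theta^\ast)\bigr)^2$ and concentrate the two pieces separately: Bernstein for the i.i.d.\ noise part, and Freedman plus the square-loss identity $\E[(X^\top(\theta-\theta^\ast))^2]=\Risk(\theta)$ for the excess part, closing a self-bounding loop $R\lesssim A+\sqrt{AR}$. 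The paper (Lemma~\ref{lem:errt2}) instead uses the purely algebraic self-bounding property $\|\nabla\ell_t(\hat\theta_{t-1})\|_\infty^2\le 4X^2\,\ell_t(\hat\theta_{t-1})$, feeds it back into the regret bound~\eqref{eq:reg_online_optimization} to obtain the \emph{deterministic} small-loss improvement $\smash{\sqrt{\sum_t\ell_t(\hat\theta_{t-1})}\le\sqrt{\sum_t\ell_t(\theta^\ast)}+\sqrt{bUB}+2aUX}$, and then needs a single concentration step: a Poissonian inequality (Theorem~\ref{thm:martingale}) on the nonnegative i.i.d.\ sum $\sum_t\ell_t(\theta^\ast)$. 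That route completely sidesteps what you yourself flag as the main obstacle of yours: there is no martingale with adapted, risk-dependent variance to control, hence no Freedman inequality and no peeling over the unknown value of $R_{i,t}$ to preserve the $\log\log T$ deviations. Your decomposition is more generic (it does not require the regret bound to be compatible with the small-loss trick), but the price is exactly the peeling argument you sketch without carrying out.

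Two concrete issues. First, it is not true that ``Theorem~\ref{thm:SAEW} applies verbatim and already provides the slow-rate side of the $\min$'': the slow-rate branch of Theorem~\ref{thm:squareloss} is $UX\sigma a'/\sqrt T$, which is strictly better than the $UBa'/\sqrt T=2UX(Y+2XU)a'/\sqrt T$ of Theorem~\ref{thm:SAEW} whenever $\sigma\ll Y+XU$. Both branches of the $\min$ require the improved $\Err_t$; you must rerun Step~3 of the proof of Theorem~\ref{thm:SAEW} with the sharpened bound, not only Step~2. Second, your Freedman estimate carries a spurious $\sqrt n$ in the deviation term: the conditional variance of the increments is at most $(3XU)^2\Risk(\hat\theta_{s-1})$, so the variance proxy is $(XU)^2R_{i,t}$ and the deviation is $XU\sqrt{R_{i,t}\log\delta^{-1}}$, with no factor $n$. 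As written the extra $\sqrt n$ only injects a lower-order $n^{1/4}$ term into $\Err_t$ and does not destroy the $1/T$ rate, but if you were to absorb $XU\sqrt{R_{i,t}\,n\log\delta^{-1}}$ by AM--GM into $R_{i,t}$ you would pick up a term linear in $n$ and lose the fast rate, so the correct variance proxy matters.
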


    \begin{remark}
    Using a well-calibrated EG for the subroutines, the main term of the excess risk is of order 
    \[
       \Risk\big(\tilde \theta_{T} \big)  =  \cO_T\bigg(\frac{d_0X^2\sigma^2}{\alpha T}\log \Big(\frac{d \log T}{\delta}\Big)\bigg) \,.
    \]
    \end{remark}

    \begin{remark}
    Similarly to Remark~\ref{rem:unbounded_gradients}, if $(X_t,Y_t)$ are subgaussian only (and not necessary bounded), classical arguments show that Theorem~\ref{thm:squareloss} still holds with $X$ of order $\cO(\log (dT))$ and $Y = \cO(\log T)$. 
    \end{remark}

    \begin{remark}
    The improvement from Theorem~\ref{thm:SAEW} to Theorem~\ref{thm:squareloss} (i.e., replacing $B$ with $X^2\sigma^2$ in the main term) is less significant if we apply it to the cumulative risk (Theorem~\ref{thm:cumulativerisk}). This would improve $B^2 \log T$ to $B^2 + X^2\sigma^2 \log T$ and thus lead to a bound on the cumulative risk of order $\cO(d_0 \sigma^2 \log (T)/\alpha )$.
    \end{remark}

    \subsection*{Calibration of the parameters} 
    To achieve the bound of Theorem~\ref{thm:squareloss}, SAEW is given the parameters $d_0$, $\alpha$, $U$, and $B$ beforehand. We provide here how to tune these parameters in order to sequentially get an estimator achieving high rate on its excess risk. To do so, we use a combination of well-known calibration techniques: doubling trick, meta-algorithm, and clipping.

    We only prove the calibration in the setting of linear regression with square loss (i.e., for Theorem~\ref{thm:squareloss} only and not for the general Theorem~\ref{thm:SAEW}). It remains an open question whether the calibration of the parameters can be performed in the general setting of Section~\ref{sec:known_parameters}. We leave this question for future research. Furthermore, for the sake of clarity the adaption to $Y$ (which is only necessary for clipping) is not considered here. However, it can be achieved simultaneously by updating the clipping range based on the past observations $Y_s$, $s\leq t-1$ (see~\cite[Section~4.5]{Gerchinovitz2011a}).

     The calibration algorithm (Algorithm~\ref{alg:calibration}) works as follows. We define large enough grids of parameters for each doubling session $j \geq 0$
    \begin{align}
        \cG_j = \Big\{(d_0, &\alpha,U,B) \in [1,\dots,d] \times \R_+^3 \quad \text{such that} \nonumber \\
          & d_0 \in \{0\} \cup \big\{2^k, k = 0,\dots, \lceil \log_2 d\rceil \big\} \nonumber \\
          & \alpha \in \big\{2^{k}, k = -2j + \lceil \log_2 (Bd_0/Y^2)\rceil, \dots, \nonumber \\
          & \pushright{j + \lceil \log_2 d_0\rceil \big\}} \nonumber \\
          & U \in \big\{2^k, k=-2j,\dots, 2j + \lceil 2 \log_2 Y\rceil  \big\} \nonumber \\
          & B \in \big\{2^k, k=-2j,\dots, 2j + \lceil 2 \log_2 Y\rceil \} \, \Big\} \label{eq:grid} \,.
    \end{align}
    For each set of parameters $p = (d_0,\alpha,U,B) \in \cG_j$, we perform a local version of SAEW to obtain an estimator $\tilde \theta_{p,j}$ at time $t=2^{j}-1$. Then, the calibration algorithm uses the online aggregation procedure BOA of \citet{Wintenberger2014} to make predictions from $t=2^j$ to $2^{j+1}-1$. Its predictions are based on online combinations of the (clipped) forecasts made by the $\tilde \theta_{p,j}$.   

    \begin{algorithm}[t!]
    \caption{Calibration algorithm}
    \label{alg:calibration}
    \begin{minipage}{0.45\textwidth}
    {\bfseries Parameters:} $Y>0$, $\delta>0$ \\[5pt]
    {\bfseries Initialization:} $t_0 = t = 1$ and $\bar \theta^{(0)} = 0$ \\[5pt]
    For each  $j = 0,1,\dots$ 
    \begin{itemize}
      \item Define the grid $\cG_j$ as in~\eqref{eq:grid}
      \item For parameters $p = (d_0,\alpha,U,B) \in \cG_j$:
      \begin{itemize}[leftmargin=10pt]
        \item Define $\delta_j = \delta/(2 (j+1)^2)$
        \item Run SAEW with parameter $(d_0,\alpha,U,B,\delta_j)$ for $t = 0,\dots, 2^{j}-1$ and get the estimator $\tilde \theta_{2^j-1}$, denoted by $\tilde \theta_{p,j}$.
        \item Define the clipped predictor 
            \[
                f_{p,j}:x \mapsto [x^\top \tilde \theta_{p,j}]_Y
            \]
            where $[\,\cdot\,]_Y := \max\big\{-Y,\min\{\,\cdot\,,Y\}\big\}$.
     \end{itemize}
      \item For $t = 2^j,\dots,2^{j+1}-1$, 
      \begin{itemize}[leftmargin=10pt]
        \item predict $\hat f_{t-1}(X_t)$ by performing BOA with experts $(f_{p,j})_{p \in \cG_j}$
        \item output the estimator $\tilde f_{t-1} = \bar f_{j}$
      \end{itemize}
      \item Define the average estimator \\[-5pt]
        \[
            \bar f_{j+1} = 2^{-j} \sum_{t=2^j}^{2^{j+1}-1} \hat f_{t-1}\,.
        \]
    \end{itemize}
    \end{minipage}
    \end{algorithm}

    \begin{theorem} \label{thm:calibration}
    Let $Y > \max_{t=1,\dots,T}|Y_t|$ almost surely. With probability $1-\delta$, the excess risk of the estimator $\tilde f_{T}$ produced by Algorithm~\ref{alg:calibration} is of order
      \begin{multline*}
          \cO_T \bigg(  \frac{Y^2}{T} \log \Big(\frac{(\log d)(\log T+\log Y)}{\delta}\Big) \\
         +  \frac{d_0 X^2 \sigma^2}{\alpha^\ast T } \log \Big(\frac{d\log T}{\delta}\Big)  \bigg) \,,
      \end{multline*}
      where $d_0 = \|\theta^\ast\|_0$ and $\alpha^\ast >0$ is the largest value of $\alpha$ satisfying Inequality~\eqref{ass:strong_convexity}.
    \end{theorem}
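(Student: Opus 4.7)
The plan is to analyze each doubling session $j$ separately, combining the high-probability SAEW excess-risk guarantee on the first half $\{1,\dots,2^j-1\}$ with an exp-concave aggregation guarantee on the second half $\{2^j,\dots,2^{j+1}-1\}$, then union-bounding over $j$ via $\delta_j=\delta/(2(j+1)^2)$ so that $\sum_{j\geq 0}\delta_j\leq \delta$.

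First I would verify grid coverage: for $j$ larger than some $j_0$ depending only on $(\alpha^\ast,\|\theta^\ast\|_1,X,Y)$, the dyadic grid $\cG_j$ in~\eqref{eq:grid} contains a parameter $p^\star=(d_0^\star,\alpha^\star,U^\star,B^\star)$ with $d_0^\star\in[\|\theta^\ast\|_0,2\|\theta^\ast\|_0]$, $\alpha^\star\in[\alpha^\ast/2,\alpha^\ast]$, $U^\star\in[\|\theta^\ast\|_1,2\|\theta^\ast\|_1]$ and $B^\star\in[2X(Y+2XU^\star),4X(Y+2XU^\star)]$. The first three inclusions keep SAEW's assumptions valid, while $B^\star$ upper-bounds $\|\nabla\ell_t(\theta)\|_\infty$ on the required $\ell_1$-ball. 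The finitely many sessions $j<j_0$ contribute only an $O(Y^2/T)$ additive term. Theorem~\ref{thm:squareloss} applied at horizon $2^j-1$ with confidence $\delta_j$ then yields
\[
    \Risk(\tilde\theta_{p^\star,j}) \lesssim \frac{d_0 X^2\sigma^2}{\alpha^\ast\, 2^j}\log\Big(\frac{d\log(2^j)}{\delta_j}\Big),
\]
and since $|Y_t|\leq Y$ almost surely, clipping to $[-Y,Y]$ only decreases the square loss, so $\Risk(f_{p^\star,j})\leq \Risk(\tilde\theta_{p^\star,j})$.

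On the second half, $f_{p,j}(X_t),Y_t\in[-Y,Y]$, so the square loss is exp-concave on that range with a uniform constant of order $Y^{-2}$, and the high-probability regret bound for BOA~\cite{Wintenberger2014} over the $|\cG_j|$ experts gives, with probability $1-\delta_j$,
\[
    \sum_{t=2^j}^{2^{j+1}-1}\bigl(\ell_t(\hat f_{t-1})-\ell_t(f_{p^\star,j})\bigr) \lesssim Y^2 \log(|\cG_j|/\delta_j).
\]
A Bernstein inequality on the conditionally bounded martingale differences $\ell_t(\hat f_{t-1})-\E[\ell_t(\hat f_{t-1})\mid\cF_{t-1}]$ and $\ell_t(f_{p^\star,j})-\E[\ell_t(f_{p^\star,j})\mid\cF_{2^j-1}]$ (noting $f_{p^\star,j}$ is $\cF_{2^j-1}$-measurable) converts this into an excess-risk statement with lower-order corrections, and Jensen applied to the square loss in the prediction value yields $\Risk(\bar f_{j+1})\leq 2^{-j}\sum_{t=2^j}^{2^{j+1}-1}\Risk(\hat f_{t-1})$, hence
\[
    \Risk(\bar f_{j+1}) \lesssim \Risk(f_{p^\star,j}) + \frac{Y^2\log(|\cG_j|/\delta_j)}{2^j}.
\]
A short cardinality count shows $\log|\cG_j|=O(\log\log d+\log(\log T+\log Y))$, so substituting the SAEW bound, using $2^j\asymp T$, and taking a union bound over $j$ produces the announced rate.

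The main obstacle I expect is bookkeeping rather than any new idea: tracking the log-log factors in both the BOA regret and the union over $j$ carefully enough that $\log((\log d)(\log T+\log Y)/\delta)$ emerges with the right multiplicative structure, and converting BOA's cumulative-loss regret into an excess-risk statement via Bernstein without losing the fast $\cO(\log|\cG_j|)$ dependence. A secondary subtlety is that the clipping $[\,\cdot\,]_Y$ inside $f_{p,j}$ is essential so that BOA sees uniformly bounded predictions and a uniform exp-concavity constant, independently of the raw $\tilde\theta_{p,j}$ whose linear predictions could otherwise be arbitrarily large.
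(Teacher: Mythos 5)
Your proposal is correct and follows essentially the same route as the paper's proof: an oracle grid point for $(d_0,\alpha,U,B)$, Theorem~\ref{thm:squareloss} applied on the first half of the session, the clipping inequality, a fast-rate aggregation bound for BOA on the second half (the paper invokes Theorem~4.5 of \citet{Wintenberger2014} with Lipschitz and strong-convexity constants $4Y$ and $2$, which packages exactly the exp-concavity-plus-Bernstein regret-to-risk conversion you sketch), Jensen's inequality, and a union bound over sessions with $\delta_j=\delta/(2(j+1)^2)$. The only cosmetic difference is that the paper replaces your ``for $j\geq j_0$ the grid covers'' argument by an explicit case analysis showing that whenever a true parameter falls outside the range of $\cG_j$ the session risk is trivially small, which additionally yields the finite-time version of the bound mentioned in the remarks.
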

    The proof is postponed to Appendix~\ref{proof:calibration}. 

    \begin{remark} 
    Similarly to the restricting eigenvalue condition of the Lasso, we believe that the strong convexity condition for $\alpha^\ast$ might be necessary on subspaces of dimension lower than $d_0$ only. However, to do so, SAEW should be used with a subroutine that produces sparse $\hat \theta_{t-1}$. Up to our knowledge, such procedures do not exist for convex optimization in the $\ell_1$-ball. As stated previously, sparse procedures such as RDA of~\citet{Xiao2010} cannot be used as subroutines since they perform optimization in the $\ell_2$-ball and suffer a linear dependence on $d$. We leave this question for future work.
    \end{remark}
    \begin{remark}
    For the sake of clarity, the above result is only stated asymptotically. However the bound also holds in finite time up to universal multiplicative constant (as done in the proof). Additional negligible terms of order $\cO(1/T^2)$ then appear in the bound. Furthermore, the finite time bound also achieves the best of the two regimes (slow rate vs fast rate) as in Theorem~\ref{thm:squareloss}.
    \end{remark}

    \begin{remark}
    Theorem~\ref{thm:calibration} has been proven only for square linear regression. However, it also holds for any strongly-convex loss function, with locally bounded gradients (i.e., with LIST condition, see~\cite{Wintenberger2014}). 
    \end{remark}

    \begin{remark} 
    To perform the calibration, we left the original framework of Section~\ref{sec:prel}. First, because of the clipping, the estimators $\tilde f_{t-1}$ produced by Algorithm~\ref{alg:calibration} are not linear any-more. Second, the meta-algorithm implies that we can observe the gradients of all subroutines SAEW simultaneously. Tuning the parameters in the original setting is left for future work.
    \end{remark}

    \section{Simulations}\label{sec:as}

    In this section, we provide computational experiments on simulated data. We compare three online aggregation procedures:
    \begin{itemize}[topsep=0pt]
        \item RDA: a $\ell_1$-regularized dual averaging method as proposed by Algorithm~2 of~\citet{Xiao2010}. The method was shown to produce sparse estimators. It obtained good performance on the MNIST data set of handwritten digits \citep{LeCun1998}. We optimize the parameters $\gamma, \rho$, and $\lambda$ in hindsight on the grid~$\cE := \{10^{-5},\dots,10^3\}$.
        \item BOA: the Bernstein Online Aggregation of \citet{Wintenberger2014}. It proposes an adaptive calibration of its learning parameters and achieves the fast rate for the model selection problem (see~\cite{Nemirovski2000}). BOA is initially designed to perform aggregation in the simplex, for the setting of prediction with expert advice (see~\cite{Cesa-BianchiLugosi2006}). We use it together with the trick of~\citet{KivinenWarmuth1997} to extend it to optimization in the $\ell_1$-ball $\cB_1(0,\|\theta^\ast\|_1)$. 
        \item SAEW: the acceleration procedure as detailed in Algorithm~\ref{alg:SAEW}. We use BOA for the subroutines since it satisfies a regret bound of the form~\eqref{eq:reg_online_optimization}. For the parameters, we use $\delta = 0.95$, $U = \|\theta^\ast\|_1$ and $d_0 = \|\theta^\ast\|_0$. We calibrate $\alpha$ and $B$ on the grid $\cE$ in hindsight. 
    \end{itemize}
    Our objective here is only to show the potential of the acceleration of BOA for a well-chosen set of parameters in the general setting of Section~\ref{sec:known_parameters}. 

    \subsection{Application to square linear regression}
    We consider the square linear regression setting of Section~\ref{sec:squareloss}. We simulate  $X_t \sim \cN(0,1)$ for $d = 500$ and 
    \[
        Y_t = X_t^\top \theta^\ast + \epsilon_t \qquad \text{with} \quad  \epsilon_t \sim \cN(0,0.01) \quad \text{i.i.d.} \,,
    \]
    where $d_0 = \|\theta^\ast\|_0 = 5$, $\|\theta^\ast\|_1 = 1$ with non-zero coordinates independently sampled proportional to $\cN(0,1)$. 

    \begin{figure}[!ht]
       \begin{minipage}[t]{.52\linewidth}
        \centering\large 
        \includegraphics[height = 3cm]{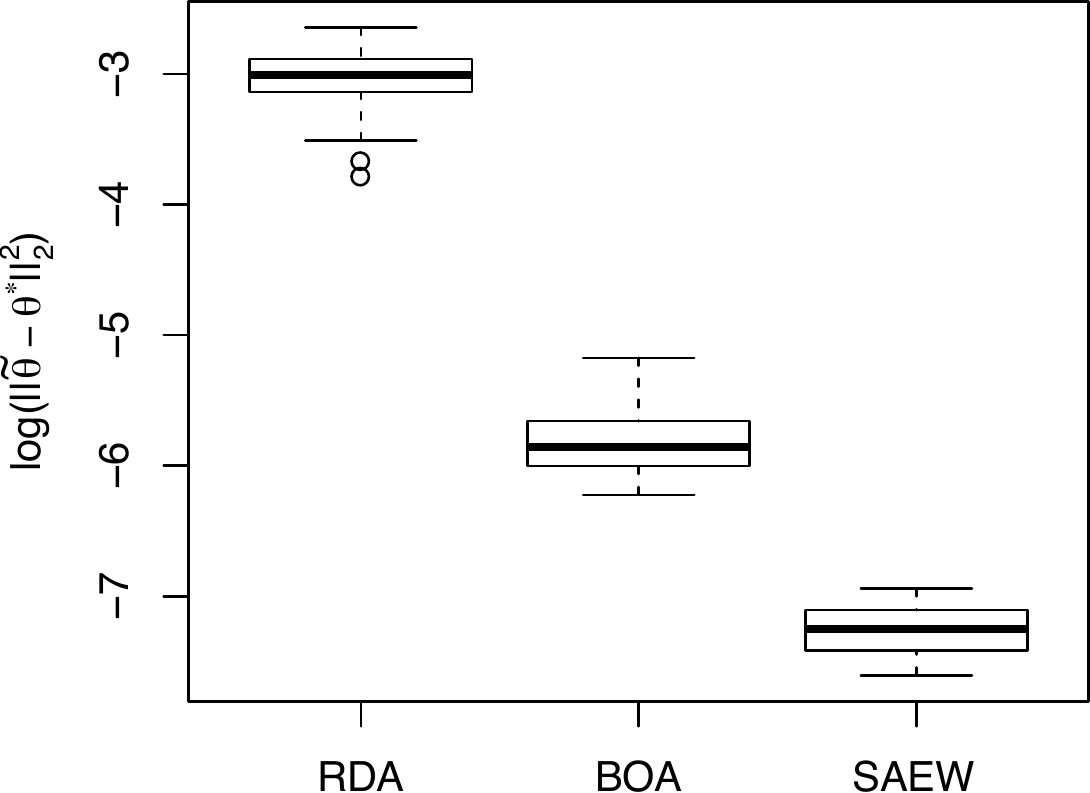}
        \subcaption{Square linear regression.} 
        \label{fig:boxplotL2}
        \end{minipage}
        \begin{minipage}[t]{.47\linewidth}
        \centering\large
        \includegraphics[height = 3cm]{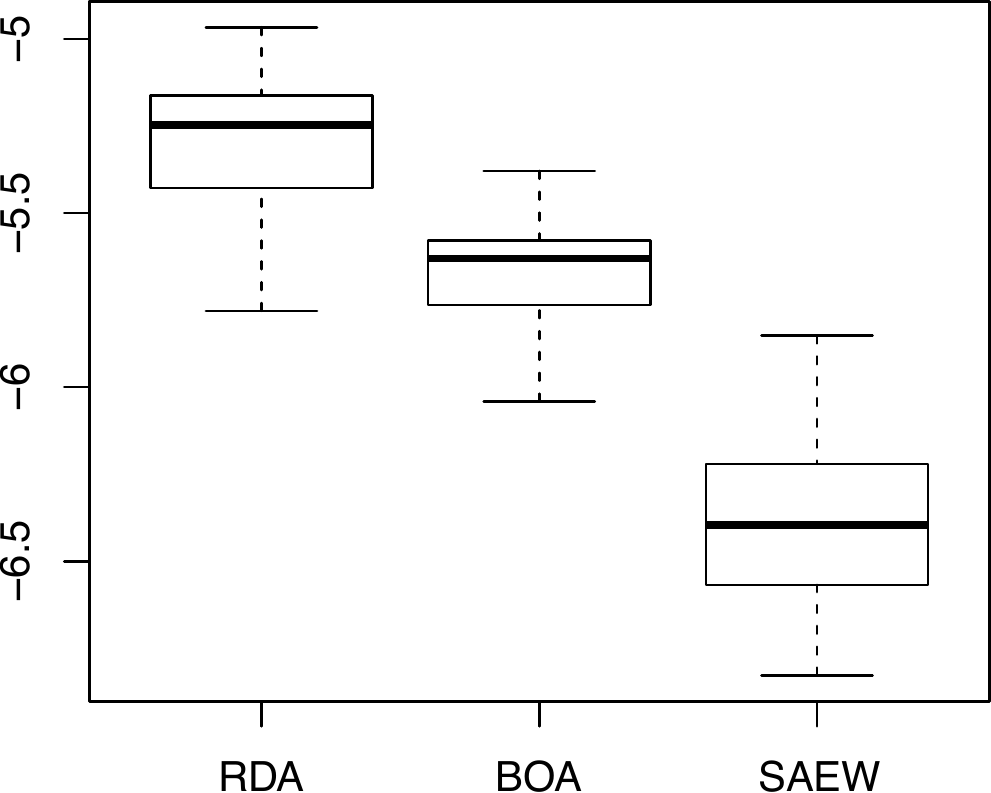}
        \subcaption{Quantile regression.}
        \label{fig:boxplotL2-quantile}
        \end{minipage}
        \caption{Boxplot of the logarithm of the $\ell_2$ errors of the estimators $\tilde \theta_T$ at time $T = 2\,000$ with $d = 500$.}
    \end{figure}

    Figure~\ref{fig:boxplotL2} illustrates the results obtained by the different procedures after the observation of $T = 2\,000$ data points. It plots the box-plot of the $\ell_2$ estimation errors of $\theta^\ast$, which is also approximatively the instantaneous risk, over 30 experiments. In contrast to BOA and SAEW, RDA does not have the knowledge of $\|\theta^\ast\|_1$ in advance. This might explain the better performance obtained by BOA and SAEW. Another likely explanation comes from the theoretical guarantees of RDA, which is only linear in $d$ (due to the sum of the squared gradients) though the $\ell_1$-penalization.
    
    In a batch context, the Lasso (together with cross-validation) may provide a better estimator for high dimensions $d$ (its averaged error would be $\log \tilde \theta_T \approx -8.8$ in Figure~\ref{fig:boxplotL2}). This is mostly due to two facts. First, because of the online setting, our online procedures are here allowed to pass only once through the data. If we allowed multiple passes, their performance would be much improved. Second, although BOA satisfies theoretical guarantees in $\sqrt{\log d}$, its performance is deeply deteriorated when $d$ becomes too large and does not converge before $T$ being very large. We believe our acceleration procedure should thus be used with sparse online sub-procedures instead of BOA, but we leave this for future research.

    \begin{figure}[!ht]
        \centering
        \includegraphics[width = 0.4\textwidth]{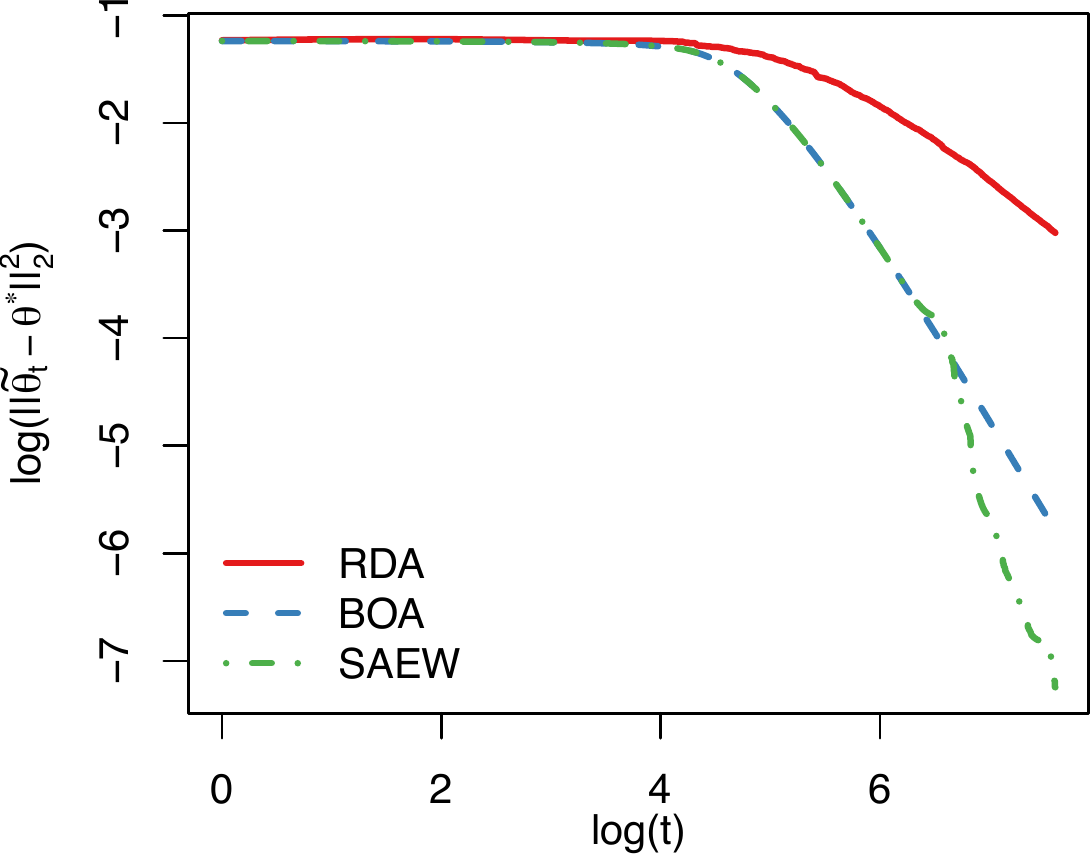}
        \caption{Averaged (over 30 experiments) evolution of the logarithm of the $\ell_2$ error.}\label{fig:evoL2}
    \end{figure}

    Figure~\ref{fig:evoL2} shows the decrease of the $\ell_2$-error over time in log/log scale. The performance is averaged over the 30 experiments. We see that SAEW starts by following BOA, until it considers to be accurate enough to accelerate the process (around $\log t \approx 6.2 $). Note that shortly after the acceleration start, the performance is shortly worse than the one of BOA. This can be explained by the doubling trick: the algorithm start learning again almost from scratch. The cumulative risks are displayed in Figure~\ref{fig:cumulative-risk-square}. SAEW and BOA  seem to achieve logarithmic cumulative risk, in contrast to RDA which seems to be of order $\cO(\sqrt{T})$. 
    
    \begin{figure}[!ht]
        \centering
        \includegraphics[width = .4\textwidth]{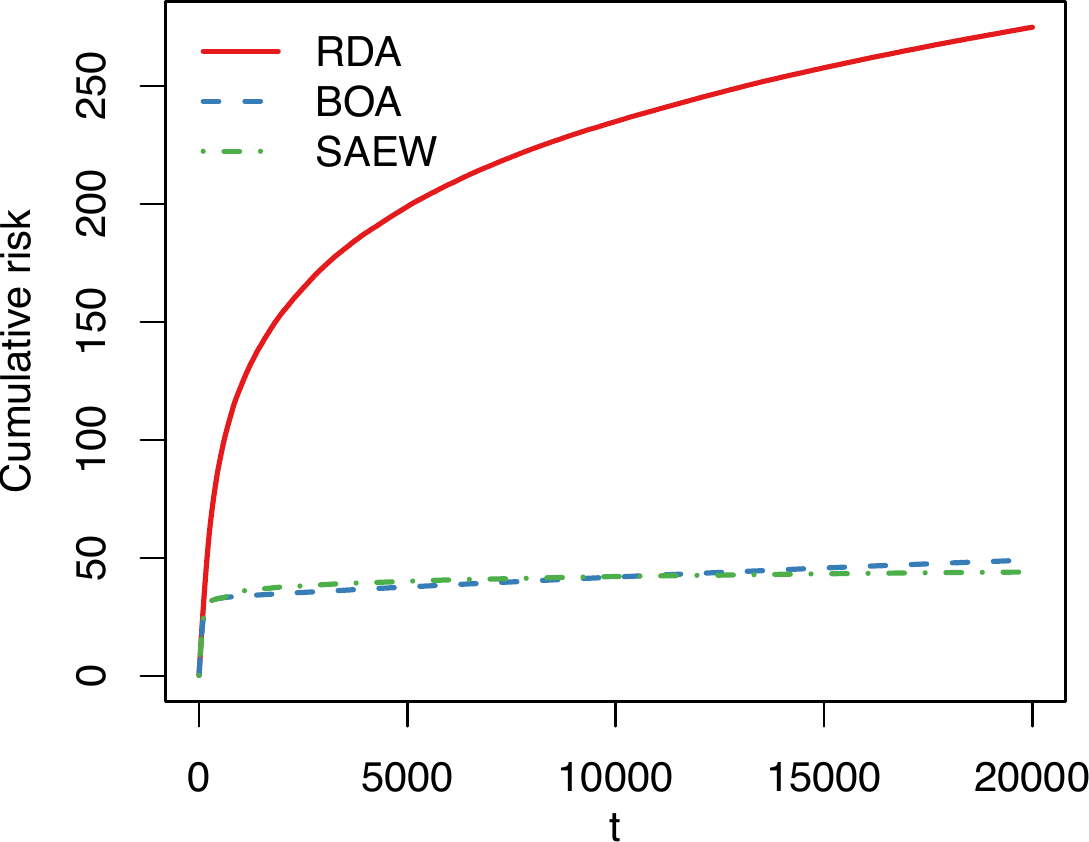}
        \caption{Averaged (over 30 runs) cumulative risk suffered by $\hat \theta_{t}$ for square linear regression.}\label{fig:cumulative-risk-square}
    \end{figure}

    In reality, the cumulative risk of BOA is of order $\cO(\sigma^2 \sqrt{T\log d} + \log d)$. In the previous experiment, because of the small value of the noise $\sigma^2 = 0.01$, the first term is negligible in comparison to the second one unless $T$ is very large. The behavior in $\sqrt{T}$ of BOA is thus better observed with higher noise and smaller dimension $d$, so that the first term becomes predominant. To illustrate this fact, we end the application on square linear regression with a simulation in small dimension $d_0 = d=2$ with higher noise $\sigma = 0.3$. Our acceleration procedure can still be useful to obtain fast rates. Figure~\ref{fig:cumulative-risk-square-smalld} shows that despite what seems on Figure~\ref{fig:cumulative-risk-square}, BOA does not achieve fast rate on its cumulative risk.

    \begin{figure}[!ht]
        \centering
        \includegraphics[width = .4\textwidth]{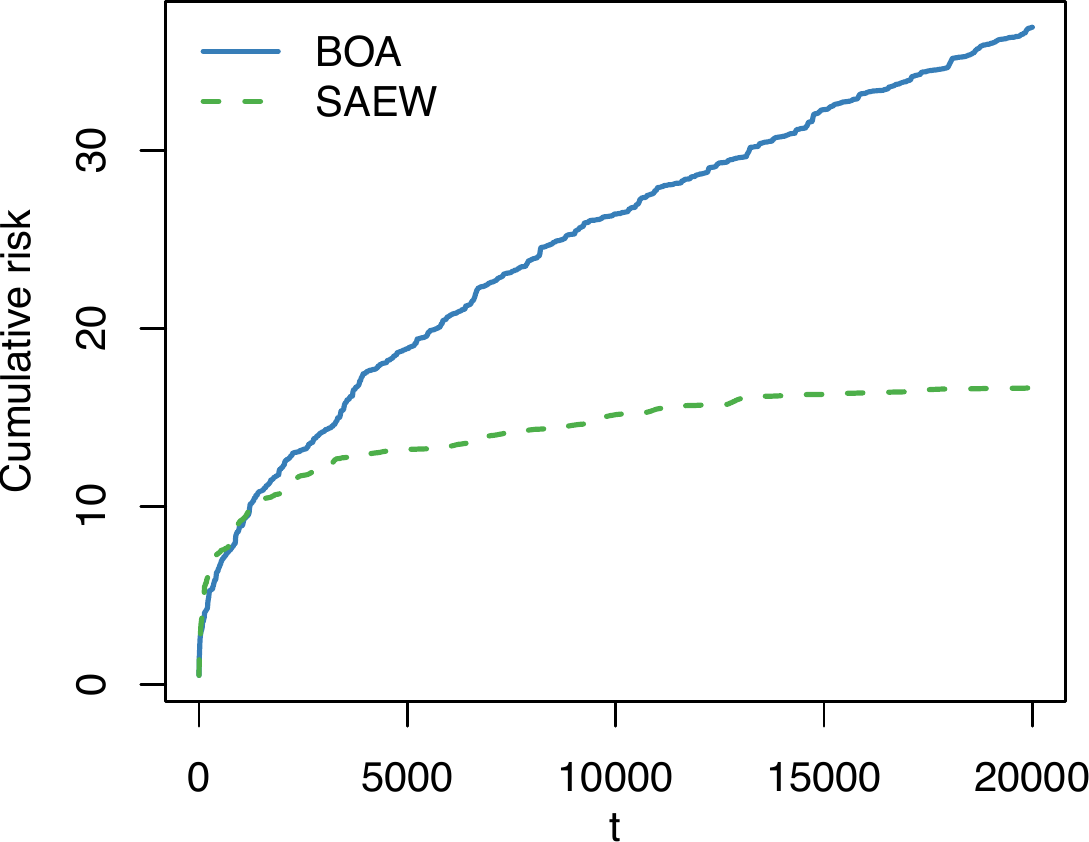}
        \caption{Cumulative risk suffered by $\hat \theta_{t}$ for square linear regression with $d=d_0=2$.}\label{fig:cumulative-risk-square-smalld}
    \end{figure}

    \subsection{Application to linear quantile regression}

    Let $\alpha \in (0,1)$. Here, we aim at estimating the conditional $\alpha$-quantile of $Y_t$ given $X_t$. A popular approach introduced by \citet{KoenkerBassett1978} consists in estimating the quantiles via the pinball loss defined for all $u \in \R$ by $\rho_\alpha(u) = u(\alpha - \mathds{1}_{u<0})$. It can be shown that the conditional quantile $q_\alpha(Y_t|X_t)$ is the solution of the minimization problem
    \[
        q_\alpha(Y_t|X_t) \in \argmin_{g} \E\big[\rho_\alpha\big(Y_t - g(X_t)\big) \big|X_t\big] \,.
    \]
    In linear quantile regression, we assume the conditional quantiles to be well-explained by linear functions of the covariates. \citet{SteinwartChristmann2011} proved that under some assumption the risk is strongly convex. We can thus apply our setting by using the loss functions 
    $
        \ell_t: \theta \mapsto \rho_\alpha\big(Y_t - X_t^\top \theta)
    $.

    We perform the same experiment as for linear regression $(Y_t,X_t)$, but we aim at predicting the $\alpha$-quantiles for $\alpha = 0.8$. To simulate an intercept necessary to predict the quantiles, we add a covariate 1 to the vector $X_t$. Figure~\ref{fig:boxplotL2-quantile} shows the improvements obtained by our accelerating procedure over the basic optimization algorithms. 

    In the next figures, to better display the dependence on $T$ of the procedures, we run them during a longer time $T = 10^5$ with $d = 100$ only.  

    \begin{figure}[!ht]
        \centering
        \includegraphics[width = 0.4\textwidth]{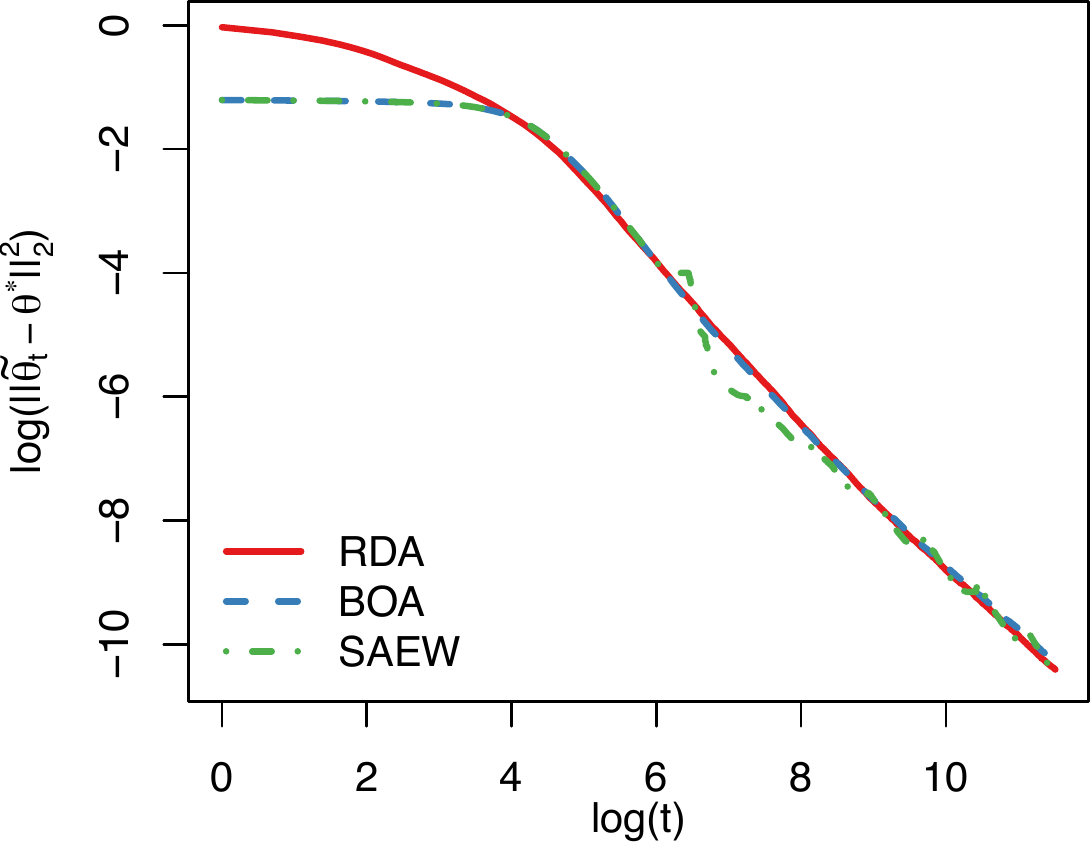}
        \caption{Averaged (over 30 runs) evolution of the logarithm of the $\ell_2$-error for quantile regression ($d=100$).}\label{fig:evoL2-quantile}
    \end{figure}

    Figure~\ref{fig:evoL2-quantile} depicts the decreasing of the $\ell_2$-errors of the different optimization methods (averaged over 30 runs). We see that unexpectedly most methods, although no theoretical properties, do achieve the fast rate $\cO(1/T)$ (which corresponds to a slope -1 on the log/log scale).  This explains why we do not really observe the acceleration on Figure~\ref{fig:evoL2-quantile}. However, we only show here the dependence on $t$ and not in $d$. 

    \begin{figure}[!ht]
        \centering
        \includegraphics[width = .45\textwidth]{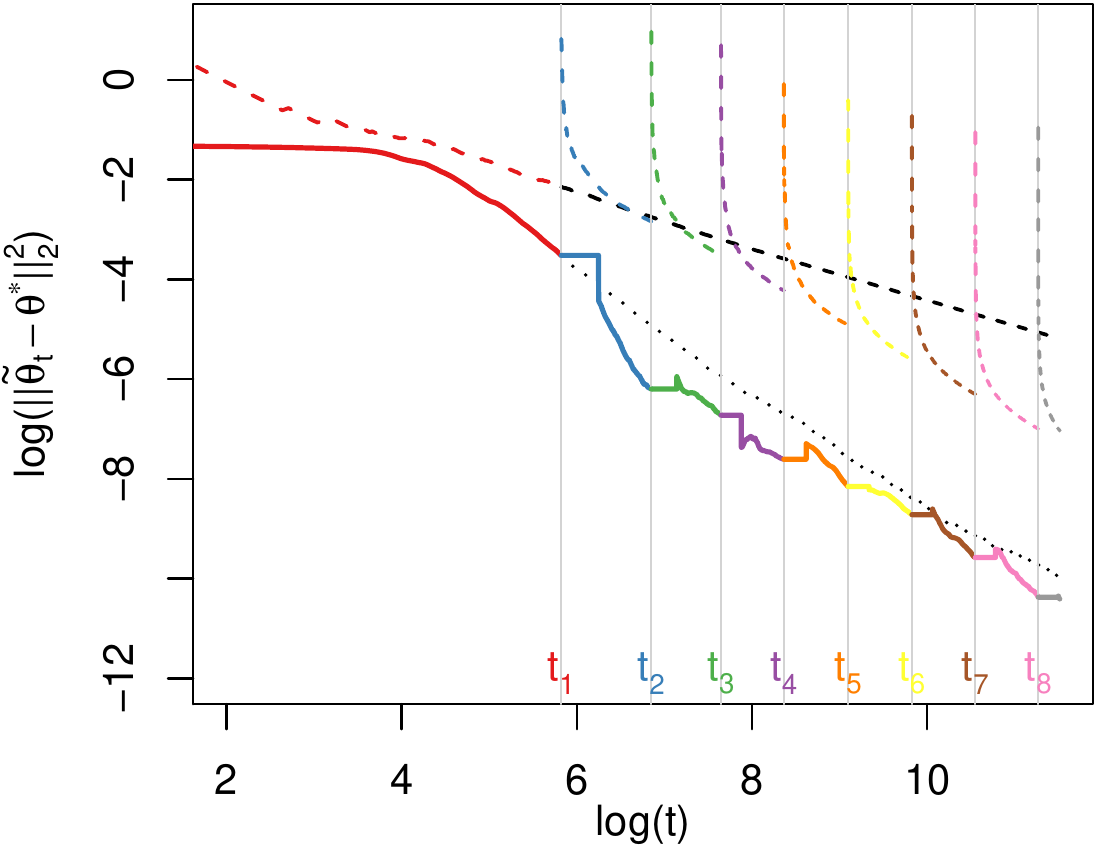}
        \caption{Logarithm of the $\ell_2$-norm of the averaged estimator $\tilde \theta_{t}$ during one run. The dashed lines represent the high probability $\ell_2$-bound estimated by SAEW on $\bar \theta_t$. The gray vertical lines are the stopping times $t_i$, $i\geq 1$. The first session is plotted in red, the second in blue,\dots The dotted and dashed black lines represent the performance (and the theoretical bound) that BOA would have obtained without acceleration.}\label{fig:acceleration-quantile}
    \end{figure}

    In Figure~\ref{fig:acceleration-quantile}, we show how the slow rate high-probability bound on BOA (slope $-1/2$ in log/log scale) is transformed by SAEW into a fast rate bound (slope -1).  To do so, it regularly restarts the algorithm to get smaller and smaller slow-rate bounds. Both BOA (dotted black line) and SAEW do achieve fast rate here though only SAEW guarantees it. It would be interesting in the future to prove the fast rate convergence for the averaged estimator produced by BOA in this context. The classical proof technique that uses a cumulative risk to risk conversion (with Jensen's inequality) will have however to be changed since the fast rate is not achieved for the cumulative risk (see Figure~\ref{fig:cumulative-risk-quantile}). 

    \begin{figure}[!ht]
        \centering
        \includegraphics[width = .4\textwidth]{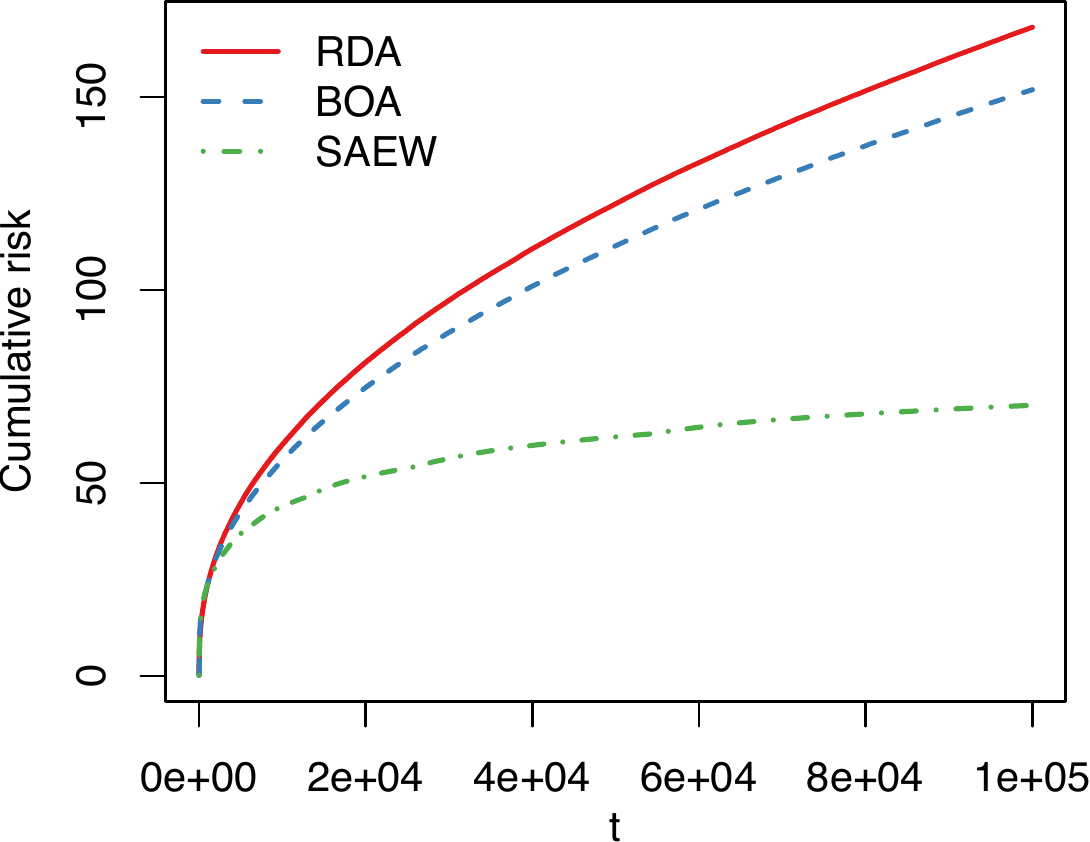}
        \caption{Averaged (over 30 runs) cumulative risk suffered by $\hat \theta_{t}$ for quantile regression ($d=100$).}\label{fig:cumulative-risk-quantile}
    \end{figure}

    \bibliographystyle{abbrvnat}
    \bibliography{biblio}
    \newpage
    \appendix
    {\Large \bfseries
\begin{center}
SUPPLEMENTARY MATERIAL
\end{center}}

\section{Proofs}
\subsection{Lemma~\ref{lem:strong_convex}}
We first state Lemma~\ref{lem:strong_convex}, a classical result in strong convexity, as it will be useful in the proofs. 
It relates the $\ell_2$-error of an estimator with its excess risk when the risk is strongly convex.

\begin{lemma}
\label{lem:strong_convex}
If the risk is $2\alpha$-strongly convex, then \\[-.5em]
\[
     \| \theta -  \theta^\ast \|_2^2 \Big. \leq \alpha^{-1} \, \Risk(\theta)
\]
for all $\theta \in \R^d$.
\end{lemma}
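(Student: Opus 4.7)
The plan is to invoke the strong convexity assumption \eqref{ass:strong_convexity} once, specialized to the pair $(\theta^\ast, \theta)$, and then exploit the first-order optimality of $\theta^\ast$ as the unconstrained risk minimizer. Concretely, I would first substitute $\theta_1 = \theta^\ast$ and $\theta_2 = \theta$ into \eqref{ass:strong_convexity}, which yields
\[
\E\big[\ell_t(\theta^\ast)\big] - \E\big[\ell_t(\theta)\big] \;\leq\; \E\big[\nabla \ell_t(\theta^\ast)\big]^{\!\top}(\theta^\ast - \theta) - \alpha \|\theta^\ast - \theta\|_2^2.
\]

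Next, I would use that $\theta^\ast$ is the minimizer of the risk $\theta \mapsto \E[\ell_t(\theta)]$ on the whole space $\R^d$; since the risk is differentiable (gradients of $\ell_t$ exist almost surely and are integrable), the first-order condition gives $\E[\nabla \ell_t(\theta^\ast)] = 0$. Substituting this into the inequality above collapses the inner-product term, and recognizing $\E[\ell_t(\theta)] - \E[\ell_t(\theta^\ast)] = \Risk(\theta)$ on the left-hand side yields
\[
\alpha \|\theta - \theta^\ast\|_2^2 \;\leq\; \Risk(\theta).
\]
Dividing by $\alpha > 0$ is the final step.

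There is no real obstacle here: the argument is a one-line consequence of \eqref{ass:strong_convexity} together with the vanishing gradient at the unconstrained minimum. The only subtle point worth a remark is that the optimality condition $\E[\nabla \ell_t(\theta^\ast)] = 0$ requires $\theta^\ast$ to lie in the interior of the domain (which is all of $\R^d$ here) and the interchange of differentiation and expectation, both of which are implicitly granted by the paper's setup. No additional constants or auxiliary lemmas are needed.
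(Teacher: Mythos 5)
Your proof is correct and follows essentially the same route as the paper: substitute $\theta_1=\theta^\ast$, $\theta_2=\theta$ into \eqref{ass:strong_convexity} and eliminate the inner-product term via the first-order optimality of $\theta^\ast$. The only (immaterial) difference is that you invoke the unconstrained stationarity condition $\E[\nabla\ell_t(\theta^\ast)]=0$, whereas the paper uses only the weaker variational inequality $\E[\nabla\ell_t(\theta^\ast)]^\top(\theta^\ast-\theta)\leq 0$ (a feasible-direction argument), which would also cover a minimizer constrained to a convex domain.
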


\begin{proof}
\label{app:lemme1}
Let $\theta \in \R^d$, by \eqref{ass:strong_convexity} applied with $\theta_1 = \theta^\ast$ and $\theta_2 = \theta$, we get
\begin{multline*}
     \big\|\theta -\theta^\ast \big\|_2^2 \leq \alpha^{-1}  \E\big[\ell_t(\theta) - \ell_t(\theta^\ast)\big] \\
        + \alpha^{-1} \E\big[\nabla\ell_t(\theta^\ast)\big]^\top (\theta^\ast-\theta) \,.
\end{multline*}
But, $\E\big[\nabla\ell_t(\theta^\ast)\big]^\top (\theta^\ast-\theta) \leq 0$. Otherwise, taking into account the convexity of the domain, the direction $d =\theta - \theta^\ast$ is a decreasing feasible direction, which contradicts the optimality of $\theta^\ast$.
\end{proof}

\subsection{Proof of Theorem~\ref{thm:SAEW}}
\label{proof:SAEW}
Let $(\delta_i)$ be a non-increasing sequence in $(0,1)$ such that $\sum_{i=1}^\infty \delta_i \leq \delta$.

\paragraph{Step 1. Proof by induction that the subroutines always perform the optimization in the correct $\ell_1$-ball.}
We prove by induction on $i \geq 0$ that with probability at least $1 - \sum_{j=1}^i \delta_j$
\begin{equation}
    \label{eq:induction}
    \big\| \theta^\ast - [\bar \theta_{t_i-1}]_{d_0} \big\|_1 \leq U 2^{-i/2}  \,.
\end{equation}

$\mathcal{H}_0$ is satisfied by assumption since $\|\theta^\ast\|_1\leq U$ and $\smash{[\bar \theta_{t_0-1}]_{d_0} = [\bar \theta_0]_{d_0} = 0}$ (see SAEW for the definition of $[\bar \theta_0]$). 

Let $i \geq 0$ and assume \eqref{eq:induction}. The following Lemma (whose proof is postponed to Appendix~\ref{sub:proof_of_lemma_bounded_gradients}) states that the gradients are indeed upper-bounded by $B$ in sup-norm.
\begin{lemma}
    \label{lem:bounded_gradients}
    Let $i \geq 0$. Under \eqref{eq:induction}, for all $t \in [t_i,t_{i+1}-1]$, 
        $
            \big\|\nabla\ell_t(\hat \theta_{t-1})\|_\infty \leq B
        $
    almost surely.
\end{lemma}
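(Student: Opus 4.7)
The plan is to reduce the claim to the $\ell_1$-ball assumption defining $B$, namely that $B \geq \sup_{\theta:\|\theta\|_1 \leq 2U} \|\nabla\ell_t(\theta)\|_\infty$ almost surely. Thus it suffices to verify that every iterate $\hat\theta_{t-1}$ produced by the subroutine $\cS_i$ during session $i$ lies in the $\ell_1$-ball of radius $2U$; the conclusion will then follow directly from the definition of $B$.

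For this, I would apply the triangle inequality twice. By construction of SAEW, the subroutine $\cS_i$ operates inside $\cB_1([\bar\theta_{t_i-1}]_{d_0}, U 2^{-i/2})$, so each iterate satisfies $\|\hat\theta_{t-1} - [\bar\theta_{t_i-1}]_{d_0}\|_1 \leq U 2^{-i/2}$. Combining this bound with the induction hypothesis~\eqref{eq:induction} yields $\|\hat\theta_{t-1} - \theta^\ast\|_1 \leq 2U\cdot 2^{-i/2}$. A second triangle inequality with $\|\theta^\ast\|_1 \leq U$ then gives $\|\hat\theta_{t-1}\|_1 \leq U(1 + 2\cdot 2^{-i/2})$, which is bounded by $2U$ for every $i \geq 2$, and the claim follows.

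The two initial sessions call for slightly tighter bookkeeping. For $i = 0$, the initialization $\bar\theta_0 = 0$, $\epsilon_0 = U$ forces $\hat\theta_{t-1} \in \cB_1(0, U)$, so $\|\hat\theta_{t-1}\|_1 \leq U$ trivially. For $i = 1$, I would observe that $[\bar\theta_{t_1-1}]_{d_0}$ is obtained by hard-thresholding an average of iterates all lying in $\cB_1(0, U)$, and that hard-thresholding can only decrease the $\ell_1$-norm; hence $\|[\bar\theta_{t_1-1}]_{d_0}\|_1 \leq U$, which combined with the radius $U/\sqrt{2}$ of $\cS_1$ gives $\|\hat\theta_{t-1}\|_1 \leq U(1 + 1/\sqrt{2}) < 2U$.

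There is no real mathematical obstacle: the lemma is essentially a trajectory-consistency check guaranteeing that SAEW's iterates never leave the region where $B$ controls the gradients almost surely. The only subtlety is managing the bookkeeping of the two extreme $\ell_1$-radii at $i \in \{0,1\}$; once those are dispatched, the generic two-triangle-inequalities argument handles all $i \geq 2$ uniformly.
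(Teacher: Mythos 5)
Your proof is correct and follows essentially the same route as the paper: reduce to showing $\|\hat\theta_{t-1}\|_1 \leq 2U$, handle $i\geq 2$ by two triangle inequalities through $[\bar\theta_{t_i-1}]_{d_0}$ and $\theta^\ast$ using the induction hypothesis, and treat $i=0$ and $i=1$ separately via the initialization and the fact that hard-thresholding an average of vectors in $\cB_1(0,U)$ keeps the $\ell_1$-norm at most $U$. The bookkeeping $U(1+2\cdot 2^{-i/2})\leq 2U$ for $i\geq 2$ matches the paper's bound exactly.
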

Therefore, from the regret bound~\eqref{eq:reg_online_optimization}, the subroutine $\cS_i$ satisfies for all $t \in [t_i,t_{i+1}-1]$
\begin{multline*}
   \sum_{s=t_i}^{t} \ell_s(\hat \theta_{s-1}) -  \ell_s(\theta^\ast) \\
   \leq U 2^{-i/2} \bigg(a \sqrt{\sum_{s=t_i}^{t} \big\|\nabla \ell_s(\hat \theta_{s-1})\big\|_\infty^2} + bB\bigg) \,.
\end{multline*}
Bounding the cumulative risk with the regret thanks to Theorem~\ref{thm:riskregret} in Appendix~\ref{sec:riskregret}, it yields with probability at least $1-\sum_{j=1}^{i+1} \delta_{j}$,
\begin{equation}
    \sum_{s=t_i}^{t} \E[\ell_s](\hat \theta_{s-1}) -  \E[\ell_s](\theta^\ast) 
      \leq  U 2^{-i/2} \Err_t
    \label{eq:risk}
\end{equation}
where $\Err_t := a_i' \sqrt{\sum_{t_i}^{t} \big\|\nabla \ell_s(\hat \theta_{s-1})\big\|_\infty^2} + b_i'B$ with 
\begin{align}
    a_i' & := a + \sqrt{2} \sqrt{\log\Big(1 + \frac{1}{2} \log\Big(\frac{t-t_i+1}{2}\Big)\Big) - \log \delta_{i+1} }, \label{eq:defa}\\[-2em]
    \text{and} & \nonumber \\
    b_i' & := b  +  \frac{1}{2} + \log\Big(1 + \frac{1}{2} \log\Big(\frac{t-t_i+1}{2}\Big)\Big) - \log \delta_{i+1} \label{eq:defb} \,.
\end{align}
Thus, recalling that by definition (see SAEW)
\[
     \bar \theta_{t} := (t-t_i+1)^{-1} \sum_{s=t_i}^{t} \hat \theta_{s-1} \,,
\]
and because the losses are i.i.d., Jensen's inequality yields
\begin{eqnarray}
    \Risk(\bar \theta_t) \hspace*{-3ex}& = & \E [\ell_{t+1}](\bar \theta_t) - \E[\ell_{t+1}](\theta^\ast) \nonumber  \\
        & \stackrel{\text{Jensen}}{\leq} & (t-t_i+1)^{-1} \sum_{s=t_i}^{t} \E[\ell_s](\hat \theta_{s-1}) -  \E[\ell_s](\theta^\ast) \nonumber \\
        & \stackrel{\eqref{eq:risk}}{\leq} &  \frac{U \Err_t}{2^{i/2}(t-t_i+1)} \,. 
    \label{eq:riskbartheta}
\end{eqnarray}
Together with the strong convexity of the risk (Lemma~\ref{lem:strong_convex}), this entails
\begin{equation}
\big\|\bar \theta_t - \theta^\ast\big\|_2^2
\leq   \frac{U \Err_t}{\alpha 2^{i/2} (t-t_i+1)} \,. 
    \label{eq:taui}
\end{equation}
We thus control the $\ell_2$-error of $\bar \theta_t$. However, in order to control the $\ell_1$-error without paying a factor $d$, we need to truncate coordinates of $\bar \theta_t$. By definition of $[\bar \theta_{t}]_{d_0}$ (see SAEW), we have
\begin{equation}
    \label{eq:defthetacenter}
    [\bar \theta_{t}]_{d_0} \in \argmin_{\theta \in \R^d: \|\theta\|_0  \leq d_0} \big\{\big\|\bar \theta_t - \theta\big\|_2\big\} \,.
\end{equation}
Now, \eqref{eq:defthetacenter} together with $\|\theta^\ast\|_0 \leq d_0$ (by assumption) yields
\begin{equation}
    \label{eq:thetaithetacenter}
    \big\|\bar \theta_t - [\bar \theta_{t}]_{d_0}  \big\|_2 \leq \big\|\bar \theta_t - \theta^\ast \big\|_2 \,.
\end{equation}
Furthermore, because both $\|\theta^\ast\|_0 \leq d_0$ and $\big\|[\bar \theta_{t}]_{d_0}\|_0 \leq d_0$, we have
\begin{equation}
    \label{eq:thetacenterthetastar}
    \|[\bar \theta_{t}]_{d_0} - \theta^\ast\|_0 \leq 2d_0 \,.
\end{equation}
Therefore, with probability at least $1-\sum_{j=0}^{i+1} \delta_j$
\begin{align}
    \big\| [\bar \theta_{t}]_{d_0} & - \theta^\ast \big\|_1  
         \stackrel{\eqref{eq:thetacenterthetastar}}{\leq}  \sqrt{2d_0}\, \big\| [\bar \theta_{t}]_{d_0} - \theta^\ast \big\|_2  \nonumber \\
        & \leq  \sqrt{2d_0} \Big( \big\| [\bar \theta_{t}]_{d_0} - \bar \theta_t\big\|_2   + \big\| \bar \theta_{t} - \theta^\ast \big\|_2\Big) \nonumber\\
        & \stackrel{\eqref{eq:thetaithetacenter}}{\leq} 2\sqrt{2d_0}  \big\| \bar \theta_t - \theta^\ast \big\|_2 \nonumber \\
        & \stackrel{\eqref{eq:taui}}{\leq} 2\sqrt{2d_0  \alpha^{-1} U \Err_t 2^{-i/2}(t-t_i+1)^{-1}}  \nonumber \\
        & =:  \epsilon_{t}\,, 
    \label{eq:defepsilont}
\end{align}
where the last equality holds by definition of $\epsilon_t$ (see SAEW). Finally, $(\mathcal{H}_{i+1})$ is fulfilled by definition of $t_{i+1}$ (see SAEW), which satisfies
$
  \epsilon_{t_{i+1}-1} \leq U 2^{-(i+1)/2} 
$.
The induction is thus completed.

In the rest of the proof, we consider that \eqref{eq:induction} are satisfied for all $i\geq 0$. This occurs with probability $1-\sum_{j=1}^\infty \delta_j \geq 1- \delta$ as stated by Step~1. 

\paragraph{Step 2. Fast rate for the excess risk of $\tilde \theta_t$.}  
First, we prove that the excess risk of $\tilde \theta_t$ is upper-bounded as
\begin{equation}
    \label{eq:fastrate}
    \Risk(\tilde \theta_t) \leq \frac{d_0 B^2}{\alpha} \left(\frac{2^7 a'^2}{t} + \frac{2^{11}b'^2}{t^2}\right) + \frac{2\alpha U^2}{d_0 t^2}  \,,
\end{equation}
for all $t\geq 1$, where $a'= a'_{\lfloor 2\log_2t\rfloor}$ and $b'= b'_{\lfloor 2\log_2 t\rfloor}$.

To do so, we start from the risk inequality~\eqref{eq:riskbartheta}. From the definition of $\epsilon_t$ (see~\eqref{eq:defepsilont}), we get 
\begin{equation}
    \label{eq:riskthetaepsilon}
      \Risk(\bar \theta_t) \leq \frac{\alpha \epsilon_t^2}{8d_0}\,, \qquad t \geq 1\,.
\end{equation}
Thus by definition of $\tilde \theta_t := \bar \theta_{\argmin_{s\leq t} \epsilon_s}$, we have
\begin{equation}
      \Risk(\tilde \theta_t) 
         \leq \frac{\alpha \min_{s\leq t} \epsilon_s^2}{8d_0} 
    \label{eq:riskthetatilde}
\end{equation}
We conclude the proof with the following lemma proved in Appendix~\ref{sub:epsilont}
\begin{lemma} 
\label{lem:epsilont}
Let $i \geq 0$. Let $t_i -1 \leq t \leq t_{i+1}$, then
\[
    \min_{s\leq t} \epsilon_s \leq U\left( \frac{\sqrt{2}\gamma a_{i}'}{\sqrt{t}} + \frac{2+ 4\gamma b_{i}'}{t}\right) \,,
\]
where $\gamma := 2^4 d_0 B/(\alpha U)$.
\end{lemma}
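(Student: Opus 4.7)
The proof rests on two ingredients. First, the while-loop stopping criterion in SAEW forces $\epsilon_{t_{j+1}-1}\leq U\,2^{-(j+1)/2}$ at the end of every session $j$, so plugging $s=t_i-1$ (or $s=t_{i+1}-1$ once $t$ is large enough) into the min yields for free
\[
\min_{s\leq t}\epsilon_s \;\leq\; U\,2^{-i/2} \quad \text{for all } t\geq t_i-1,
\]
and the sharper bound $U\,2^{-(i+1)/2}$ as soon as $t\geq t_{i+1}-1$. The remaining task is to translate the prefactor $2^{-i/2}$ into a decreasing function of $t$, by upper bounding the session start times~$t_j$.

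For this, I would substitute the almost-sure gradient bound of Lemma~\ref{lem:bounded_gradients} into the definition of $\epsilon_t$. Writing $n:=t-t_i+1$ and using the identity $\gamma U = 16 d_0 B/\alpha$, one obtains
\[
\epsilon_t^2 \;\leq\; \frac{\gamma U^2}{2\cdot 2^{i/2}}\!\left(\frac{a_i'}{\sqrt{n}}+\frac{b_i'}{n}\right).
\]
The session-end trigger $\epsilon_t\leq U\,2^{-(i+1)/2}$ is therefore implied by $\gamma(a_i'/\sqrt{n}+b_i'/n)\leq 2^{-i/2}$, a quadratic in $\sqrt{n}$ whose positive root is bounded above by $\gamma a_i'2^{i/2}+\sqrt{\gamma b_i'2^{i/2}}$. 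Squaring and telescoping over $j=0,\dots,i$ (using the monotonicity $a_j'\leq a_i'$, $b_j'\leq b_i'$ together with the geometric-series bounds $\sum_j 2^j\leq 2^{i+1}$ and $\sum_j 2^{j/2}\leq 4\cdot 2^{i/2}$) produces an explicit upper bound of the form
\[
t_{i+1} \;\leq\; 4\gamma^2(a_i')^2\,2^i \;+\; 8\gamma b_i'\,2^{i/2} \;+\; \cO(i).
\]

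Inverting this bound yields the claimed inequality by matching each term on the right-hand side of the lemma to the corresponding component of $t_{i+1}$: the leading piece $U\sqrt 2\,\gamma a_i'/\sqrt{t}$ absorbs $U\,2^{-(i+1)/2}$ in the $a_i'$-dominated regime (where $\sqrt{t}\lesssim 2\gamma a_i' 2^{i/2}$), the correction $4\gamma b_i' U/t$ absorbs the $b_i'$ contribution to $t_{i+1}$, and the additive $2U/t$ covers the $\cO(i)$ low-order term together with the integer ceilings $\lceil n_j^\ast\rceil\leq n_j^\ast+1$ in each session length. The main obstacle is precisely this final bookkeeping: the sharp constants $\sqrt 2$ and $2+4\gamma b_i'$ require a case split according to whether $t$ lies in the $a_i'$- or $b_i'$-dominated regime of session $i$, and a careful choice between the bounds $U\,2^{-i/2}$ (valid for all $t\geq t_i-1$) and $U\,2^{-(i+1)/2}$ (available only once session $i$ has actually ended).
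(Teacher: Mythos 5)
Your proposal follows essentially the same route as the paper: bound $\min_{s\leq t}\epsilon_s$ by $\epsilon_{t_i-1}\leq U2^{-i/2}$ via the stopping rule, bound each session length $T_j=t_{j+1}-t_j$ through the almost-sure gradient bound and a quadratic inequality in $\sqrt{T_j}$, telescope to control $t_{i+1}$, and invert. The only divergence is in the final bookkeeping: the paper reaches the stated constants by using the sharper root bound $T_j\leq 1+2^{j}\gamma^2 a_j'^2+2^{j/2}\gamma b_j'$ and then solving the resulting second-order inequality in $2^{(i+1)/2}$ exactly (yielding $2^{-(i+1)/2}\leq \gamma a_i'/\sqrt{t_{i+1}}+\sqrt{2}\,(1+2\gamma b_i')/t_{i+1}$, after which $\epsilon_{t_i-1}\leq \sqrt{2}\,U2^{-(i+1)/2}$ and $t\leq t_{i+1}$ finish the argument), whereas your cruder squaring of the root and the regime case-split would deliver the lemma only up to a factor of about $2$ in the leading term.
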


Let $i\geq 0$ such that $t_i-1\leq t\leq t_{i+1}$. Lemma~\ref{lem:epsilont} together with \eqref{eq:riskthetatilde} and $(x+y)^2 \leq 2x^2+2y^2$ for $x,y\geq0$, yields
\begin{align}
    \Risk(\tilde \theta_t) 
        & \leq \frac{\alpha U^2\gamma^2}{8d_0} \bigg(\frac{\sqrt{2} a_{i}'}{\sqrt{t}} + \frac{2\gamma^{-1} + 4b_{i}'}{t}\bigg)^2 \label{eq:riskthetatilecarre}\\
        & \leq \frac{ \alpha U^2\gamma^2}{d_0} \left(\frac{a_i'^2}{2t} +  \frac{2\gamma^{-2}+8b_i'^2}{t^2}\right)  \,.
    \label{eq:risktildetehetatend}
\end{align}
Now, remark that if $i \geq 2 \log t$, then $\epsilon_{t_i-1} \leq U 2^{-i} \leq U/t$ and from \eqref{eq:riskthetatilde}, $\Risk(\tilde \theta_t) \leq \alpha U^2/ (8d_0 t^2)$. Together, with \eqref{eq:risktildetehetatend}, we get
\begin{equation*}
    \Risk(\tilde \theta_t) \leq \frac{\alpha U^2 \gamma^2}{d_0} \left(\frac{ a'^2}{2t} + \frac{2\gamma^{-2}+ 8b'^2}{t^2}\right)  \,,
\end{equation*}
with $a'= a'_{\lfloor 2\log_2 t\rfloor}$ and $b'= b'_{\lfloor 2\log_2 t\rfloor}$.  Substituting $\gamma = 2^4 d_0 B/(\alpha U)$ concludes the proof of Inequality~\eqref{eq:fastrate}.

\paragraph{Step 3. Slow rate for the excess risk of $\tilde \theta_t$.}
Now, we prove that
\begin{equation}
    \label{eq:slowrate}
   \Risk(\tilde \theta_t) \leq  UB \left(\frac{a'}{\sqrt{t/2}} + \frac{4b'}{t}\right) + \frac{\alpha U^2}{8d_0 t},\ t\geq 1 \,.
\end{equation}

For small values of $t$, the slow rate will be satisfied  from the initial bound of the subroutine during the first session. At some time $\tau >0$, the fast rate becomes better than the slow rate. This splitting time is defined as the solution of the equality
\begin{equation}
    \label{eq:deftau}
    \frac{\Err_{t_1-1}}{t_1-1} = B\Big(\frac{\sqrt{2}a'}{\sqrt{\tau}}+\frac{2 \gamma^{-1}+ 4 b'}{\tau}\Big)\,.
\end{equation}

Let $t \geq 1$. To control $\Risk(\tilde \theta_t)$, we distinguish three cases:
\begin{itemize}
    \item if $t \leq t_1-1$, then, since by definition of $\epsilon_s$
    \[
        \argmin_{s\leq t} \frac{\Err_s}{s} = \argmin_{s\leq t} \varepsilon_s \,,
    \]
    we get from Inequality~\eqref{eq:riskbartheta} that
    \begin{align*}
        \Risk(\tilde \theta_t) 
            & = \Risk(\bar \theta_{\argmin_{s\leq t} \epsilon_s}) \\
            & \leq  U 2^{-0/2} \min_{s\leq t} \frac{\Err_s}{s} \\
            & \leq U \frac{\Err_t}{t} \,.
    \end{align*}
    By definition of $\Err_t$ (see~\eqref{eq:risk}) and upper-bounding the gradients by $B$, we get
    \[
        \Risk(\tilde \theta_t) \leq UB \Big( \frac{a_0'}{\sqrt{t}} + \frac{b_0'}{t}\Big)\,.
    \]
    \item if $t_1 \leq t \leq \tau$, then following the same reasoning as above, we have
    \[
        \Risk(\tilde \theta_t) \leq U \frac{\Err_{t_1-1}}{t_1-1} \,,
    \]
    which yields by definition of $\tau$ (see Equality~\eqref{eq:deftau}) and by using $t \leq \tau$:
    \begin{align*}
        \Risk(\tilde \theta_t) 
            & \leq UB \Big( \frac{\sqrt{2}a'}{\sqrt{\tau}} + \frac{2\gamma^{-1}+4b'}{\tau}\Big)\\
            & \leq UB \Big( \frac{\sqrt{2}a'}{\sqrt{t}} + \frac{2\gamma^{-1}+4b'}{t}\Big)\,.
    \end{align*}

    \item if $\tau \leq t$, since by definition of $t_1$ (see SAEW), $\epsilon_{t_1-1} \leq U/2$, then by definition of $\epsilon_{t_1-1}$ (see~\eqref{eq:defepsilont}),
    \[
        2\sqrt{2d_0  \alpha^{-1} U \frac{\Err_{t_1-1}}{t_1-1}} \leq \frac{U}{2}\,,
    \]
    and thus taking the square and rearranging the terms
    \[
        \frac{d_0}{\alpha} \leq \frac{U}{2^5} \Big(\frac{t_1-1}{\Err_{t_1-1}}\Big)\,.
    \]
    Using the definition of $\gamma = 2^4 d_0 B /(\alpha U)$ and substituting $\Err_{t_1-1}$ with Equality~\eqref{eq:deftau}, this yields
    \[
        \frac{\alpha U^2\gamma^2}{8d_0} = \frac{2^5 d_0 B^2}{\alpha} \leq U B \Big(\frac{\sqrt{2}a'}{\sqrt{\tau}}+\frac{2\gamma^{-1}+4b'}{\tau}\Big)^{-1} \,.
    \]
    Finally from Inequality~\eqref{eq:riskthetatilecarre}, and using $\tau \leq t$
    \[
        \Risk(\tilde \theta_t) \leq UB \left(\frac{\sqrt{2}a'}{\sqrt{t}} + \frac{2\gamma^{-1} + 4b'}{t}\right)\,.
    \]
\end{itemize}

Combining the three cases together and substituting $\gamma = 2^4 d_0 B/(\alpha U)$, concludes the proof of Inequality~\eqref{eq:slowrate}.

\paragraph{Step~4. Conclusion of the proof}
Combining Inequalities~\eqref{eq:fastrate} and~\eqref{eq:slowrate}, we get the risk inequality stated in the theorem for $\tilde \theta_t$. 
It only remains to choose $\delta_j = \delta / (j+1)^2$ so that $\sum_{j=1}^\infty \delta_j\leq \delta$ and to control $a'= a'_{\lfloor 2\log_2 t\rfloor}$ and $b'= b'_{\lfloor 2\log_2 t\rfloor}$. From~\eqref{eq:defa}, we can use $\delta_{\lfloor 2\log_2 t\rfloor+1} \geq \delta / (1+2\log_2 t)^2$ and $T_i \leq t$. Straighforward calculation yields that  $a'- a$ is lower than
\begin{align*} 
    & \sqrt{2\big(\log(1 + 1/2 \log (t/2)) - \log \delta + 2\log(1+ 2 \log_2 t)\big)} \\
    & \leq \sqrt{6 \log(1 + 3 \log t) - 2 \log \delta}.
\end{align*}
Similarly, for $b'-b$. It is upper-bounded by
\begin{align*}
    & \frac{1}{2} + \log\big(1 + (1/2)\log(t/2)\big) - \log \delta + 2\log(1+ 2 \log_2 t) \\
    & \leq 1/2 + 3 \log(1+3\log t) - \log \delta\,.
\end{align*}
This concludes the proof.

\subsection{Proof of Lemma~\ref{lem:bounded_gradients}} 
\label{sub:proof_of_lemma_bounded_gradients}

Since by assumption $B \geq \max_{\theta:\|\theta\|_1\leq 2U} \|\nabla \ell_t(\theta)\|_\infty$ a.s. Therefore, it suffices to show that $\|\hat \theta_{t-1}\|_1 \leq 2U$. By definition of the session $\cS_i$, 
    \[
        \hat \theta_{t-1} \in \cB_1([\bar \theta_{t_i-1}]_{d_0},U2^{-i/2}) \,.
    \]
Thus:
\begin{itemize}[topsep=-3pt,parsep=2pt,itemsep=2pt]
    \item if $i=0$, since $[\bar \theta_{0}]_{d_0} = 0$, $\|\hat \theta_{t-1}\|_1\leq U$.
    \item if $i = 1$, then since $\|[\bar \theta_{t_1-1}]\|_1\leq U$ as a truncated average of vectors in $\cB_1(0,U)$, we have
\begin{align*}
    \|\hat \theta_{t-1}\|_1 
        & \leq \|\hat \theta_{t-1} - [\bar \theta_{t_1-1}]_{d_0}\|_1 + \|[\bar \theta_{t_1-1}]_{d_0}\|_1 \\
        & \leq U/\sqrt{2} + U \leq 2 U;
\end{align*}
    \item otherwise, $i \geq 2$ and $\|\hat \theta_{t-1}\|_1$ is bounded by
    \begin{multline*} 
        \|\hat \theta_{t-1} - [\bar \theta_{t_i-1}]_{d_0}\|_1 + \|[\bar \theta_{t_i-1}]_{d_0} - \theta^\ast\|_1 + \|\theta^\ast\|_1 \\
        \stackrel{\eqref{eq:induction}}{\leq} U2^{-i/2} + U2^{-i/2} + U \leq 2U \,.
    \end{multline*}
\end{itemize}
Putting the tree cases together, $\|\hat \theta_{t-1}\|_1\leq 2U$, which concludes the proof.

\subsection{Proof of Lemma~\ref{lem:epsilont}}
\label{sub:epsilont}

It is enough to control $\epsilon_{t_i-1} \geq \min_{s\leq t} \epsilon_s$. To do so, we prove that for every $j \geq 0$, $T_j := t_{j+1} - t_j$ cannot be too large, so that at time $t$, $i$ will be at least of order $\log_2 t$.

Let $j \geq 0$. We can assume $t_{j+1}>t_j$, otherwise $T_j = 0$. 
Thus, from the bound on the gradients (Lemma~\ref{lem:bounded_gradients}) and from the definition of $\Err_t$ (see~\eqref{eq:risk}) for all $t \in [t_j+1,t_{j+1}]$,
  \begin{equation}
    \label{eq:errt}
    \Err_{t-1} \leq B (a_j'  \sqrt{t-t_j} + b_j') \,,
  \end{equation}
and from the definition of $\epsilon_{t-1}$ (see ~\eqref{eq:defepsilont})
  \[
    \epsilon_{t-1} \leq 2\sqrt{2d_0  \alpha^{-1} U B \frac{a_j'\sqrt{t-t_j}  + b_j'}{2^{j/2}(t-t_j)}} \,.
  \]
Since by definition, $t_{j+1}$ is the smallest integer after $t_j$ that satisfies $\epsilon_{t_{j+1}-1} \leq U2^{-(j+1)/2}$, we have $\epsilon_{t_{j+1}-2} \geq U 2^{-(j+1)/2}$. This implies
\begin{align*}
   & 2  \sqrt{2d_0  \alpha^{-1} U B \frac{a_j' \sqrt{T_j-1}+ b_j'}{2^{j/2}(T_j-1)} } \geq U 2^{-(j+1)/2} \\
    & \pushright{\Leftrightarrow  \,  2^{j/2} \underbrace{2^4 d_0  \alpha^{-1} U^{-1}B}_{:= \gamma} \big(a_j' \sqrt{T_j-1} +  b_j' \big)  \geq T_j-1}
\end{align*}
Then, by solving a second order equation 
in $\sqrt{T_j-1}$ (see for instance~\cite[Lemma~10]{GaillardStoltzEtAl2014}), the above inequality entails
\begin{equation}
   T_j   \leq  1 + 2^{j} \, \gamma^2 a_j'^2 + 2^{j/2} \gamma b_j' \,. \label{eq:Ti}
\end{equation}
Therefore, summing over $j=0,\dots,i$ 
\begin{align*}
  t_{i+1} & = \cancel{t_0} + \sum_{j=0}^{i} T_j  \\
    & \leq \sum_{j=0}^{i} \big(1 + 2^{j} \, \gamma^2 a_j'^2 + 2^{j/2} \gamma b_j \big)\\
    & \leq 2^{1+i} \, \gamma^2 a_{i}'^2 + (1+\sqrt{2})2^{(i+1)/2} \gamma b_{i}' + i+1 \\
    & \leq 2^{1+i} \, \gamma^2 a_{i}'^2 + 2^{(i+1)/2} \sqrt{2} \big(2\gamma b_{i}' + 1\big)  \,,
\end{align*}
where the last inequality is because $2^{(i+1)/2}\geq \sqrt{2}(i+1)$ for $i\geq 0$.
Solving the second-order inequality in $2^{(i+1)/2}$ we get
\[
    2^{-(i+1)/2} \leq \frac{\gamma a_{i}'}{\sqrt{t_{i+1}}} + \sqrt{2}\frac{1+ 2\gamma b_{i}'}{t_{i+1}} \,.
\]
Thus, since $\epsilon_{t_i-1} \leq U 2^{-i/2}$, we have
\[
    \epsilon_{t_i-1} \leq U \gamma \left(\frac{\sqrt{2} a_{i}'}{\sqrt{t_{i+1}}} + \frac{2\gamma^{-1} + 4b_{i}'}{t_{i+1}}\right) \,.
\]
The proof of Lemma~\ref{lem:epsilont} finally follows using $t\leq t_{i+1}$.

\subsection{Proof of Theorem~\ref{thm:cumulativerisk}}
With probability $1-\delta$, all inequalities provided in the proof of Theorem~\ref{thm:SAEW} are satisfied. We also consider the notation of the previous proof. Let $t \geq 1$.

\paragraph{Step~1. Slow rate} 
We remark that for any $i\geq 0$,
\begin{align}
\sum_{s=t_i}^{(t_{i+1}-1)\wedge t} &  \E[\ell_s](\hat \theta_{s-1}) - \E[\ell_s](\theta^\ast) 
     \stackrel{\eqref{eq:risk}}{\leq} U 2^{-i/2} \Err_{(t_{i+1}-1)\wedge t} \nonumber\\
    & \stackrel{\eqref{eq:errt}}{\leq} UB2^{-i/2} (a_i' \sqrt{t} + b_i') \label{eq:slowratesmsalli}
\end{align}
where, in the last inequality, we use that $(t_{i+1}-1)\wedge t \leq t$ and $t_i \geq 1$. We will use this inequality for $i \leq \lfloor 2\log t\rfloor$. For $i > \lfloor 2\log t\rfloor$, we use the fact that the gradients are bounded by $B$, so that by convexity of the risk
\begin{align}
\sum_{s=t_i}^{(t_{i+1}-1)\wedge t} & \E[\ell_s](\hat \theta_{s-1}) - \E[\ell_s](\theta^\ast) \nonumber \\
&  \leq \sum_{s=t_i}^{(t_{i+1}-1)\wedge t} \|\E[\nabla\ell_s](\hat \theta_{s-1})\big\|_\infty \|\hat \theta_{s-1}-\theta^\ast\|_1  \nonumber \\
& \leq U B 2^{-i/2} t \,. \label{eq:sumlargei}
\end{align}
Summing~\eqref{eq:slowratesmsalli} over $i=0,\dots,\lfloor 2 \log_2 t\rfloor$ and~\eqref{eq:sumlargei} over $i = \lceil 2\log_2 t \rceil,\dots,\infty$, we get
\begin{align}
  \Risk_{1:t}(\hat \theta_{0:(t-1)}) 
    &:= \sum_{s=1}^{t} \E[\ell_s](\hat \theta_{s-1}) - \E[\ell_s](\theta^\ast) \nonumber \\
    & \leq  UB \sum_{i=0}^{\lfloor2\log_2 t\rfloor} 2^{-i/2} (a_i' \sqrt{t} + b_i') \nonumber  \\
    & \pushright{+ UBt \sum_{i = \lceil 2\log_2 t\rceil}^\infty 2^{-i/2}} \,. \label{eq:2sumsslow}
\end{align}
The second sum is controlled as
\[
    \sum_{i = \lceil 2 \log_2 t \rceil}^\infty  2^{-i/2}  \leq t^{-1} \sum_{i = 0}^\infty  2^{-i/2} \,.
\]
Thus, since $\sum_{i=0}^\infty 2^{-i/2} = 2+\sqrt{2}\leq 4$, we have
\[
    \Risk_{1:t}(\hat \theta_{0:(t-1)}) \leq 4 UB (a' \sqrt{t} + b') + 4 UB \,,
\]
where we recall that $a'= a'_{\lfloor 2\log_2 t\rfloor}$ and $b'= b'_{\lfloor 2\log_2 t\rfloor}$.
This concludes Step~1.

\paragraph{Step~2. Fast rate} Let us now prove the fast rate
\begin{multline*}
    \Risk_{1:t}\big(\hat \theta_{0:(t-1)}\big) \leq \frac{ 2^5 d_0 B^2}{\alpha} a'^2 \log_2 t \\
        + 4 B U (1+b') + U^2 \frac{\alpha}{8 d_0} \,,
\end{multline*}
for all $t \geq 1$.

First, we remark that similarly to~\eqref{eq:riskthetaepsilon}, we get for all $i\geq 0$ that
\begin{align}
\sum_{s=t_i}^{t_{i+1}-1} \E[\ell_s](\hat \theta_{s-1}) - \E[\ell_s](\theta^\ast)  & \stackrel{\eqref{eq:risk}}{\leq} U \frac{\Err_{t_{i+1}-1}}{2^{-i/2} T_i} T_i \nonumber \\
        & \stackrel{\eqref{eq:defepsilont}}{\leq} \frac{\alpha \varepsilon_{t_{i+1}-1}^2}{8d_0} T_i \nonumber \\
        & \leq \frac{\alpha U^2 2^{-i}}{16d_0} T_i
        \label{eq:sumsmalli}
\end{align}
where the last inequality is because $\varepsilon_{t_{i+1}-1} \leq U2^{-(i+1)/2}$ by definition of $t_{i+1}$ (see SAEW). We will use this inequality for $i \leq \lfloor 2\log t\rfloor$. 
Summing~\eqref{eq:sumsmalli} over $i=0,\dots,\lfloor 2 \log_2 t\rfloor$ and~\eqref{eq:sumlargei} over $i = \lceil 2\log_2 t \rceil,\dots,\infty$, we get
\begin{align}
  \Risk_{1:t}(\hat \theta_{0:(t-1)}) 
    &:= \sum_{s=1}^{t} \E[\ell_s](\hat \theta_{s-1}) - \E[\ell_s](\theta^\ast) \nonumber \\
    & \leq  \frac{U^2\alpha}{2^4d_0} \sum_{i=0}^{\lfloor2\log_2 t\rfloor}  2^{-i} T_i  \nonumber \\
    & \pushright{+ UBt \sum_{i = \lceil 2\log_2 t\rceil}^\infty 2^{-i/2}} \,. \label{eq:2sums}
\end{align}
We upper bound both sums. The second one is controlled as we did for~\eqref{eq:2sumsslow}. The first one is upper-bounded thanks to \eqref{eq:Ti}
\begin{align*}
    & \sum_{i=0}^{\lfloor 2 \log_2 t\rfloor}  2^{-i} T_i 
       \leq \sum_{i=0}^{\lfloor 2 \log_2 t\rfloor}  \Big( \gamma^2 a_i'^2  + 2^{-i/2} \gamma b_i' + 2^{-i} \Big)\\
      &\pushright{ \leq 2 \gamma^2 a'^2 \log_2 t + 4\gamma b' + 2 \,.}
\end{align*}
 Therefore, substituting the two sums into~\eqref{eq:2sums}, the cumulative risk $\Risk_{1:t}(\hat \theta_{0:(t-1)})$ is upper-bounded by
\[
    \frac{U^2\alpha}{2^4 d_0} \Big( 2 \gamma^2 a'^2 \log_2 t + 4\gamma b'+2\Big) + 4 UB
     \,,
\]
which, by substituting $\gamma = 2^4 d_0 B/(\alpha U)$, is equal to
\[
    \frac{ 2^5 d_0 B^2}{\alpha} a'^2 \log_2 t + 4 B U (1+b') + \frac{\alpha U^2}{8 d_0} \,.
\]
This concludes the proof.

\subsection{Proof of Theorem~\ref{thm:squareloss}}
\label{proof:squareloss}

Let first check that we are indeed in the setting of Theorem~\ref{thm:SAEW}. The risk is strongly convex because for any $\theta_1,\theta_2\in \R^d$
    \begin{align*}
        &\E[\ell_t(\theta_1) - \ell_t(\theta_2)]  
              = \E \Big[ (Y_t - X_t^\top \theta_1)^2 - (Y_t - X_t^\top \theta_2)^2\Big] \\
             & = \E\Big[  -2(Y_t - X_t^\top \theta_1) X_t^\top(\theta_1 - \theta_2) - \big(X_t^\top(\theta_1 - \theta_2)\big)^2\Big] \\
             & = \nabla \E[\ell_t](\theta_1)^\top (\theta_1 - \theta_2) - (\theta_1 -\theta_2)^\top \E\big[X_tX_t^\top\big] (\theta_1 -\theta_2) \,.
    \end{align*}
Assumption~(\ref{ass:strong_convexity}) is thus satisfied with $\alpha = \lambda_{\min}(\E\big[X_tX_t^\top\big])$. Besides, for all $\theta$ such that $\|\theta\|_1 \leq 2U$, we have
\[
    \|\nabla \ell_t(\theta)\|_\infty = \|2(Y_t-X_t^\top \theta)X_t\|_\infty \leq 2(Y + 2XU)X = B \,.
\]

Now, we mimic the proof of Theorem~\ref{thm:SAEW}.     In the rest of the proof, we consider that \eqref{eq:induction} are satisfied for all $i\geq 0$. This occurs with probability $1- \delta$ and all inequalities stated in the proof of Theorem~\ref{thm:SAEW} are satisfied.

The proof is based on the following Lemma that we substitute to Inequality~\eqref{eq:errt} from the proof of Theorem~\ref{thm:SAEW}.

\begin{lemma}
\label{lem:errt2}
For all $t \in [t_i,t_{i+1}-1]$, with probability $1-\delta_{i+1}$,
\begin{equation*}
    \Err_{t-1} \leq 2\sqrt{2} X \sigma a_i' \sqrt{t-t_i} +  B c_i'\,,
\end{equation*}
where 
  $
      c_i' :=  b_i' + a_i'\big(\sqrt{\log \delta_{i+1}^{-1} } + \sqrt{2b} + 2 a\big)
  $. We recall that $\Err_{t-1}$ is defined in~\eqref{eq:risk}.
\end{lemma}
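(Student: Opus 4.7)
The plan is to exploit the self-bounding property of the square loss: for any $\theta$ with $\|\theta\|_1 \leq 2U$, we have $\nabla \ell_s(\theta) = -2(Y_s - X_s^\top \theta)X_s$, hence $\|\nabla \ell_s(\theta)\|_\infty^2 \leq 4X^2(Y_s - X_s^\top \theta)^2 = 4X^2\,\ell_s(\theta)$. Setting $S := \sum_{s=t_i}^{t-1} \|\nabla \ell_s(\hat \theta_{s-1})\|_\infty^2$, this gives $S \leq 4X^2 \sum_{s=t_i}^{t-1} \ell_s(\hat\theta_{s-1})$, which reduces the task to controlling the cumulative loss of $\hat\theta_{s-1}$ in terms of $\sigma^2(t-t_i)$.

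Next I would decompose $\sum \ell_s(\hat\theta_{s-1}) = \sum[\ell_s(\hat\theta_{s-1}) - \ell_s(\theta^\ast)] + \sum \ell_s(\theta^\ast)$. Under the event \eqref{eq:induction}, Lemma~\ref{lem:bounded_gradients} gives $\|\nabla \ell_s\|_\infty\leq B$ and $\theta^\ast$ lies in the ball $\cB_1([\bar\theta_{t_i-1}]_{d_0}, U 2^{-i/2})$ where $\cS_i$ optimizes, so the regret bound~\eqref{eq:reg_online_optimization} applied with $\epsilon = U 2^{-i/2}$ yields
\[
    \sum_{s=t_i}^{t-1}\!\bigl[\ell_s(\hat\theta_{s-1}) - \ell_s(\theta^\ast)\bigr] \leq a U 2^{-i/2}\sqrt{S} + b U 2^{-i/2} B.
\]
For the term $\sum \ell_s(\theta^\ast)$ I would apply a Bernstein inequality for the i.i.d.\ bounded sequence $\ell_s(\theta^\ast) \in [0,(Y{+}XU)^2]$ with mean $\sigma^2$. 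The variance bound $\E[\ell_s(\theta^\ast)^2] \leq (Y{+}XU)^2 \sigma^2$ together with the AM--GM split $\sqrt{(t-t_i)\sigma^2\cdot(Y{+}XU)^2\log\delta_{i+1}^{-1}} \leq \tfrac{1}{2}\sigma^2(t-t_i) + \tfrac{1}{2}(Y{+}XU)^2\log\delta_{i+1}^{-1}$ produces, with probability $1-\delta_{i+1}$,
\[
    \sum_{s=t_i}^{t-1} \ell_s(\theta^\ast) \leq 2\sigma^2(t-t_i) + C(Y{+}XU)^2\log \delta_{i+1}^{-1}
\]
for a small absolute constant $C$.

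Combining all three ingredients and using the calibration $B = 2X(Y+2XU)$, which implies $4X^2 U \leq B$ and $4X^2(Y{+}XU)^2 \leq B^2$, I obtain an inequality of the form
\[
    S \leq 8 X^2 \sigma^2 (t-t_i) + aB\sqrt{S} + bB^2 + C' B^2 \log \delta_{i+1}^{-1}.
\]
This is a quadratic in $\sqrt{S}$. Solving it and using $\sqrt{x+y+z}\leq \sqrt{x}+\sqrt{y}+\sqrt{z}$ gives
\[
    \sqrt{S} \leq 2\sqrt{2}\,X\sigma\sqrt{t-t_i} + B\bigl(2a + \sqrt{2b} + \sqrt{\log \delta_{i+1}^{-1}}\bigr),
\]
where the $2a$ arises because the root-of-quadratic formula produces $\tfrac{Ba}{2}+\tfrac{1}{2}\sqrt{B^2 a^2 + \cdots}$ which splits into $Ba$ plus the remaining square root.

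Finally, multiplying by $a_i'$ and adding $b_i' B$ recovers $\Err_{t-1}$ on the left and the announced bound $2\sqrt{2}X\sigma a_i'\sqrt{t-t_i}+Bc_i'$ on the right, where the definition of $c_i'$ absorbs $b_i'$ and the three terms $a_i'(2a+\sqrt{2b}+\sqrt{\log\delta_{i+1}^{-1}})$. The main obstacle is the careful bookkeeping in the quadratic solution: the constants $2a$, $\sqrt{2b}$ and $\sqrt{\log \delta_{i+1}^{-1}}$ must come out cleanly, which forces the particular AM--GM splitting of the Bernstein cross term and the use of the inequalities $4X^2 U \leq B$ and $4X^2(Y{+}XU)^2 \leq B^2$ to absorb all loss-boundedness constants into a single factor of $B$.
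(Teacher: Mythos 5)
Your proposal follows essentially the same route as the paper: the self-bounding inequality $\|\nabla\ell_s(\theta)\|_\infty^2 \leq 4X^2\ell_s(\theta)$, the subroutine's regret bound to pass from $\sum\ell_s(\hat\theta_{s-1})$ to $\sum\ell_s(\theta^\ast)$, a Bernstein-type concentration giving $2\sigma^2(t-t_i) + \cO(1)\,(Y+XU)^2\log\delta_{i+1}^{-1}$ (the paper uses its Poissonian martingale inequality, Theorem~\ref{thm:martingale}, with the factor $e-1$ in place of your AM--GM split), and a second-order inequality whose solution is absorbed into $B$ via $B = 2X(Y+2XU)$. The only difference is the order of operations --- you concentrate before solving the quadratic in $\sqrt{S}$, whereas the paper solves the quadratic in $\smash{\sqrt{\sum_s\ell_s(\hat\theta_{s-1})}}$ first and then concentrates --- which is immaterial.
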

\begin{proof}[Proof of Lemma~\ref{lem:errt2}]
In the particular case of the square loss, the gradients are given by $\nabla \ell_t(\theta) = 2X_t(X_t^\top \theta - Y_t)$, so that 
\begin{equation}
    \|\nabla\ell_t(\hat \theta_{t-1})\|_\infty^2 \leq 4X^2 \ell_t(\hat \theta_{t-1}) \,.
    \label{eq:gradient_ellt}
\end{equation}
Following \cite[Corollary 2.2]{Gerchinovitz2011a}, we get from Inequality~\eqref{eq:reg_online_optimization} that
\[
   \sum_{t=t_{\mathrm{start}}}^{t_{\mathrm{end}}}  \ell_t(\hat \theta_{t-1}) -   \ell_t(\theta^\ast) \leq 2 a U X \sqrt{\sum_{t_{\mathrm{start}}}^{t_{\mathrm{end}}} \ell_t(\hat \theta_{t-1})} \\
      + b U B\,.
\]
Solving the second-order inequality (see~\cite[Lemma 10]{GaillardStoltzEtAl2014}), it yields the improvement for small losses
\[
    \sqrt{\sum_{t=t_{\mathrm{start}}}^{t_{\mathrm{end}}}  \ell_t(\hat \theta_{t-1})} \leq \sqrt{\sum_{t=t_{\mathrm{start}}}^{t_{\mathrm{end}}}  \ell_t(\theta^\ast)} + \sqrt{b U  B} + 2a U X\,.
\]
Thus, from~\eqref{eq:gradient_ellt},
\begin{multline*}
    \sqrt{\sum_{s=t_i}^{t-1} \big\|\nabla\ell_s(\hat \theta_{s-1})\big\|_\infty^2 } \leq 2X \sqrt{\sum_{s=t_{i}}^{t-1}  \ell_s(\theta^\ast)} \\
    + 2X \sqrt{b U  B} + 4 a U X^2  \,.
\end{multline*}
But, with probability $1-\delta_{i+1}$, we have from Theorem~\ref{thm:martingale}
\begin{align*}
    \sum_{s=t_{i}}^{t-1}  \ell_s(\theta^\ast) &\leq (e-1) \sum_{s=t_i}^{t-1} \E[\ell_s(\theta^\ast)] + (Y+XU)^2 \log \delta_{i+1}^{-1} \\
        &  \leq 2 \sigma^2 (t-t_i) + (Y+XU)^2 \log \delta_{i+1}^{-1}  \,,
\end{align*}
where $\sigma^2 = \E[\ell_t(\theta^\ast)]$. Plugging into the previous inequality and using $\sqrt{x+y}\leq \sqrt{x}+\sqrt{y}$ for $x,y>0$, this yields
\begin{align}
    & 2^{-1} X^{-1} \sqrt{\sum_{s=t_i}^{t-1} \big\|\nabla\ell_s(\hat \theta_{s-1})\big\|_\infty^2 } \\ 
      & \leq \sqrt{2} \sigma \sqrt{t-t_i} + (Y+XU) \sqrt{\log \delta_{i+1}^{-1}}  + \sqrt{b U  B} + 2 a U X \nonumber \\
      & \leq  \sqrt{2}\sigma \sqrt{t-t_i} + 2^{-1}BX^{-1} \big(\sqrt{\log \delta_{i+1}^{-1}}  + \sqrt{2b} + 2a \big) \,, \label{eq:bound_gradients_2}
\end{align}
where the second inequality is because $B/(2X) \geq (Y+XU) \geq XU$.
The proof of Lemma~\ref{lem:errt2} is concluded by using the definition of $\Err_{t-1}$ (see~\eqref{eq:risk}).
\end{proof}

The proof of Theorem~\ref{thm:squareloss} is then completed following the one of Theorem~\ref{thm:SAEW} by using Lemma~\ref{lem:errt2} instead of Inequality~\eqref{eq:errt}. Finally, it only suffices to substitute $B a_i'$ with $2\sqrt{2}X\sigma a_i'$ and $b_i'$ with $c_i'$ in the final results. At the end, $b'$ of Theorem~\ref{thm:SAEW} must thus be substituted with
\begin{align*}
    c' & := b' + a'\big(\sqrt{2 \log (1 +2 \log_2 T) - \log \delta } 
      + \sqrt{2b} + 4a\big) \\
      & \leq 1/2 + b + 3 \log(1+3\log T) - \log \delta  \\
      & \qquad + \big(a + \sqrt{6 \log(1 + 3 \log t) - 2 \log \delta} \big) \\
      & \pushright{ \big(\sqrt{2 \log (1 +3\log T) - \log \delta } + \sqrt{2b} + 2a\big) }\\
      & \leq \frac{1}{2} + b + 3 \log(1+3\log T) - \log \delta + 4 a^2 \\
      & \pushright{  + 2 b + 6 \log (1 + 3\log T) - 2 \log \delta) }\\
      & \leq  1/2 + 3b + 4a^2 + 9 \log (1 + 3\log T) - 3 \log \delta \,. \\
      & \lesssim 1+b+a^2 + \log\log T - \log \delta
\end{align*}

However, in contrast to the bound $B$ on the gradients, Lemma~\ref{lem:errt2} only holds with probability $1-\delta_{i+1}$ (instead of almost surely). A union bound over all events states that the final result only holds with probability $1-\delta - \sum_{i=1}^\infty \delta_{i+1} = 1-2\delta$. To get a result with probability $1-\delta$, $\delta$ must thus be multiplied by $2$ in the results.

This gives that, from the risk bound of Theorem~\ref{thm:SAEW}, with probability $1-\delta$, $\Risk\big(\tilde \theta_t\big)$ is upper-bounded by
\begin{align*}
        \min \bigg\{ & 4U \left(\frac{ X\sigma a'}{\sqrt{T}} + \frac{ B c '}{T}\right) + \frac{\alpha U^2}{8d_0T}, \\
        & \qquad {\frac{d_0 }{\alpha} \left(\frac{2^{10} X^2 \sigma^2 a'^2}{T} + \frac{2^{11}B^2 c'^2}{T^2}\right) + \frac{2\alpha U^2}{d_0 T^2}
        \bigg\}\,,}
\end{align*}
where $a' = 2a + 2\sqrt{6 \log(1 + 3 \log T) + 2 \log (2/\delta) }$ and $c' = 1 + 3b + 4a^2 + 9 \log (1 + 3\log T) + 3 \log (2/\delta) $.

The bound of the theorem is then obtained by using that $B = 2X(Y+2XU)$.

\subsection{Proof of Theorem~\ref{thm:calibration}}
\label{proof:calibration}

For the sake of clarity, we only perform this proof up to universal constants.
Let $B^\ast = 2X(Y+2X\|\theta^\ast\|_1) \geq \max_{\theta \in \cB(0,2\|\theta^\ast\|_1)} \|\nabla \ell_t(\theta)\|_\infty$ almost surely. We also define by $\alpha^\ast$ the maximal number strong convexity parameter that satisfies~\eqref{ass:strong_convexity}.

Let $T \geq 1$. Then, by definition (see Alg.~\ref{alg:calibration}), $\tilde f_{T-1} = \bar f_j$ for $j = \lfloor \log_2 T \rfloor-1$. 

We aim at controlling the excess risk of the average estimator $\bar f_j = \sum_{t=2^j}^{2^{j+1}-1} \hat f_t$. To do so, we control the cumulative risk for $t = 2^j,\dots,2^{j+1}-1$
\begin{multline*}
    \Risk^{(j)} := \sum_{t=2^j}^{2^{j+1}-1} \E_{t-1}\big[(Y_t-\hat f_t(X_t))^2\big] \\
      - \E\big[(Y_t-X_t^\top \theta^\ast)^2\big] \,,
\end{multline*}
where $\E_{t-1}[\,\cdot\,] = \E\big[\cdot|(X_1,Y_1),\dots,(X_{t-1},Y_{t-1})\big]$. We will use that 
\begin{equation}
    \label{eq:riskbarfj}
    \Risk(\bar f_j) \leq \Risk^{(j)}2^{-j} \lesssim \frac{\Risk^{(j)}}{T}.
\end{equation}
We first prove that it exists a predictor $f_{p,j}$ with $p \in \cG_j$ that has a small excess risk. Then, we will apply Theorem~4.5 of \cite{Wintenberger2014} to show that BOA almost achieves this performance.

\paragraph{Step~1. Either it exists a predictor $f_{p,j}$ with small excess risk or $\Risk^{(j)}$ is small.} 
Since all predictions $\hat f_t(X_t)$ lie in $[-Y,Y]$ almost surely, 
\begin{equation}
  \label{eq:riskjM}
    \Risk^{(j)} \leq Y^2 2^j \leq Y^2 T  \,.
\end{equation}
Let $d_0$ in $\cG_j$ (i.e., a power of 2) such that $d_0 / 2 \leq \|\theta^\ast\|_0 \leq d_0$. We show that if the conditions of Theorem~\ref{thm:squareloss} cannot be satisfied with any parameter of the grid $\cG_j$, the cumulative risk $\smash{\Risk^{(j)}}$ is small enough. We start with the choice of the parameter $U$, which should be of order $\|\theta^\ast\|_1$:
\begin{enumerate}[label=\emph{\alph*})]
  \item If $\|\theta^\ast\|_1 \leq 2^{-2j}$. It exists a predictor in $\cG_j$ such that $f_{p,j} = 0$ (consider $d_0 =0$). In this case, 
  \begin{multline*}
      \Risk(f_{p,j}) = \E[(Y_t-0)^2] \leq B^\ast \|\theta^\ast\|_1 \\
       \leq B^\ast 2^{-2j} \lesssim B^\ast T^{-2} \,,
  \end{multline*}
  where we used that $2^{-j}\lesssim  T^{-1}$.
  \item If $\|\theta^\ast\|_1 \geq 2^{2j + \lceil 2 \log Y\rceil}$, then $2^j \leq Y^{-2}\|\theta^\ast\|_1 2^{-j}$ and from Inequality~\eqref{eq:riskjM}, 
    \[
        \Risk^{(j)} \leq  \|\theta^\ast\|_1 2^{-j} \lesssim \frac{\|\theta^\ast\|_1}{T} \lesssim \frac{\|\theta^\ast\|_0 (B^\ast)^2}{\alpha^\ast T} \,.
    \]
\end{enumerate}
Otherwise, we can choose $U$ in $\cG_j$ such that $U/2 \leq \|\theta^\ast\|_1 \leq U$. Similarly for $B$:
\begin{enumerate}[label=\emph{\alph*}),start=3]
  \item if $B < 2^{-2j}$, then for $f_{p,j} = 0$,
  \begin{multline*}
      \Risk(f_{p,j}) = \E[\ell(Y_t,0)] \leq B^\ast \|\theta^\ast\|_1 \\
         \leq \|\theta^\ast\|_1 2^{-2j}\lesssim \frac{\|\theta^\ast\|_1 }{T^2}  \,,
  \end{multline*}
  \item if $B > 2^{2j+ \lceil 2\log Y\rceil }$, then from Inequality~\eqref{eq:riskjM}, $\Risk^{(j)} \leq B^\ast 2^{-j} \lesssim B^\ast T^{-1}$.
\end{enumerate}
Otherwise, we can choose $B$ in $\cG_j$ such that $B/2 \leq B^\ast \leq B$. Finally, for $\alpha$:
\begin{enumerate}[label=\emph{\alph*}),start=5]
   \item if $\alpha^\ast < 2^{-2j + \lceil \log_2 (B^2d_0/Y^2)\rceil} \leq d_0 B^2 2^{-2j}/Y^2$, then $2^j \leq d_0 B^2 / (Y^2 \alpha^\ast 2^j)$ and thus
  \[
      \Risk^{(j)} \leq Y^2 2^j \leq Y^2\frac{d_0 B^2}{Y^2 \alpha^\ast 2^j} \lesssim \frac{\|\theta^\ast\|_0 (B^\ast)^2}{ \alpha^\ast T}\,.
  \]
\end{enumerate}

Otherwise, we can choose $\alpha$ in $\cG_j$ such that $\min\{d_0/T,\alpha^\ast/2\}\leq  \alpha \leq \alpha^\ast$. 
\begin{enumerate}[label=\emph{\alph*}),start=6]
    \item Applying Theorem~\ref{thm:squareloss}, with high probability the excess risk of the estimator $f_{p,j}$ with the choice $(d_0,\alpha,U,B)$ described above satisfies
  \begin{align*}
   & \hspace*{-1cm} \Risk(f_{p,j}) 
         \stackrel{\text{clipping}}{\leq} \Risk(\tilde \theta_{p,j})\\
        & \hspace*{-1cm} \lesssim    \min \bigg\{\frac{ X^2 }{\gamma} \left(\frac{\sigma^2 a'^2}{T} + \frac{(Y+X\|\theta^\ast\|_1)^2 c'^2}{T^2}\right) + \frac{\gamma \|\theta^\ast\|_1^2}{ T^2} , \\
        & \hspace*{-1cm}  \qquad  \|\theta^\ast\|_1 X \left(\frac{ \sigma a'}{\sqrt{T}} + \frac{ (Y+X\|\theta^\ast\|_1) c '}{T}\right) + \frac{\gamma \|\theta^\ast\|_1^2}{ T}
        \bigg\}\,,
  \end{align*}
  with $\gamma = \max\{d_0/\alpha,1/T\}$.
\end{enumerate}

Putting everything together, either (for cases b), d), and e))
  \begin{equation}
    \label{eq:riskj1}
       \Risk^{(j)} \lesssim \Big(B^\ast  + \frac{\|\theta^\ast\|_0 (B^\ast)^2}{\alpha^\ast}\Big) T^{-1}
  \end{equation}
  or, for cases a), c), and f), there exists $p \in \cG_j$ such that with high probability
  \begin{align}
      &  \Risk(f_{p,j}) \lesssim   
         \min \bigg\{
            \frac{1}{\gamma} \left(\frac{X^2\sigma^2 a'^2}{T} + \frac{(B^\ast c')^2}{T^2}\right) + \frac{\gamma \|\theta^\ast\|_1^2}{ T^2} , \nonumber \\
        &    \|\theta^\ast\|_1 X \left(\frac{ \sigma a'}{\sqrt{T}} + \frac{ (Y+X\|\theta^\ast\|_1) c '}{T}\right) + \frac{\gamma \|\theta^\ast\|_1^2}{ T} \bigg\} + \frac{B^\ast}{T^2}
        \,, \label{eq:riskj2}
  \end{align}
  
  \paragraph{Step~2. Bound of the meta-algorithm.} Using that the square loss is $4Y$-Lipschitz over the domain $[-2Y,2Y]$ and 2-strongly convex, we can apply Theorem~4.5 of \cite{Wintenberger2014} with $C_b = 4Y$, $C_\ell = 2$, and $M = \# \cG_j$.  We get that with high enough probability 
  \begin{align*}
    \Risk^{(j)}  \lesssim &\  T \min_{p \in \cG_j} \Risk(f_{p,j}) \\
      &  \pushright{+ Y^2 \big(\log \# \cG_j + \log(\log T+\log Y)  - \log \delta \big) \,.}
  \end{align*}
  Substituting 
    \begin{equation*}
        \#\cG_j \lesssim (j+\log Y)^3 \log d 
            \lesssim (\log T + \log Y)^3 \log d \,,
    \end{equation*}
  this yields
  \begin{align*}
    \Risk^{(j)}  \lesssim &\  \frac{\min_{p \in \cG_j} \Risk(f_{p,j})}{T} \\
      &  \pushright{+ Y^2 \big(\log \log d + \log(\log T+\log Y)  - \log \delta \big) \,.}
  \end{align*}
  Combining with Inequality~\eqref{eq:riskj2}, we obtain that $\Risk^{(j)}$ is at most of order
  \begin{align*}
    &  \Risk^{(j)} \lesssim   Y^2 \big(\log \log d + \log(\log T+\log Y)  - \log \delta \big)  \\
    & +  \min \bigg\{
            \frac{1}{\gamma} \left(X^2\sigma^2 a'^2 + \frac{(B^\ast c')^2}{T}\right) + \frac{\gamma \|\theta^\ast\|_1^2}{ T} , \nonumber \\
        &    \|\theta^\ast\|_1 X \left( \sigma a'\sqrt{T} +  (Y+X\|\theta^\ast\|_1) c '\right) + \gamma \|\theta^\ast\|_1^2 \bigg\} + \frac{B^\ast}{T}.
  \end{align*}
  Finally, using Inequality~\eqref{eq:riskbarfj}, keeping only the main asymptotic term in $1/T$, and substituting $a'\lesssim \log ((d\log T)/\delta)$ concludes the proof.

\section{Martingale inequalities}
In this section, we prove two martingale inequalities that are used in the analysis.

\subsection{Poissonian inequality}

First, we prove a Poissonian inequality which only works for nonnegative increments.
\begin{theorem}
  \label{thm:martingale}
   Let $T \geq 1$. Let $(X_t)_{t\geq 1}$ be a sequence of random variables such that $X_t \in [0,B]$ almost surely, then with probability at least $1-\delta$
    \[
      \sum_{t=1}^T X_t \leq  (e-1)\sum_{t=1}^T \E_{t-1}[X_t] + B\log(1/\delta)\,.
    \]
\end{theorem}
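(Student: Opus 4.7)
The plan is to use the classical Chernoff/Cramér method for supermartingales, exploiting the one-sided boundedness $X_t \in [0,B]$. The key linearization is that the exponential is convex, so for any $x \in [0,B]$ and any $\lambda > 0$,
\[
    e^{\lambda x} \leq 1 + \frac{e^{\lambda B} - 1}{B}\, x,
\]
obtained by comparing the convex function $x \mapsto e^{\lambda x}$ with the chord joining $(0, 1)$ and $(B, e^{\lambda B})$ on $[0,B]$. This step is what forces the nonnegativity assumption on $X_t$; without it the chord inequality fails.

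Taking conditional expectation given $\mathcal{F}_{t-1}$ and using $1+u \leq e^u$, we obtain
\[
    \E_{t-1}\bigl[e^{\lambda X_t}\bigr] \leq \exp\!\Bigl(\frac{e^{\lambda B}-1}{B}\, \E_{t-1}[X_t]\Bigr).
\]
Define
\[
    M_T := \exp\!\Bigl(\lambda \sum_{t=1}^T X_t - \frac{e^{\lambda B}-1}{B}\sum_{t=1}^T \E_{t-1}[X_t]\Bigr).
\]
A direct computation with the tower property shows $\E[M_T] \leq 1$. By Markov's inequality applied to $M_T$, with probability at least $1-\delta$,
\[
    \lambda \sum_{t=1}^T X_t \leq \frac{e^{\lambda B}-1}{B}\sum_{t=1}^T \E_{t-1}[X_t] + \log(1/\delta).
\]

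It remains to optimize in $\lambda$. The explicit tuning $\lambda = 1/B$ gives the factor $(e^{\lambda B}-1)/(\lambda B) = e-1$ in front of the conditional expectation, and yields the clean constant $B$ in front of $\log(1/\delta)$, exactly matching the claimed inequality. The only mild obstacle is the initial chord bound, which is standard but crucial: it converts a one-sided boundedness into a sub-Poissonian MGF bound that is tight enough to produce the constant $(e-1)$ rather than the looser $1/2$ obtained from a Hoeffding-type argument. No martingale structure on $X_t$ itself is needed beyond the adaptedness implicit in writing $\E_{t-1}[X_t]$.
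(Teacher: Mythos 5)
Your proof is correct and follows essentially the same route as the paper: the paper invokes Lemma~A.3 of Cesa-Bianchi and Lugosi (2006) for the per-step bound $\E_{t-1}[\exp(sZ_t-(e^s-1)\E_{t-1}[Z_t])]\leq 1$ with $Z_t=X_t/B$, then chains it by the tower property, applies Markov's inequality, and sets $s=1$, which is exactly your choice $\lambda=1/B$. The only difference is that you re-derive that lemma via the chord bound on $e^{\lambda x}$ rather than citing it, which is a welcome but inessential addition.
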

\begin{proof}
Let $Z_t = X_t / B \in [0,1]$. From \cite[Lemma A.3]{Cesa-BianchiLugosi2006}, for all $t \geq 1$, and all $s>0$
\[
    \E_{t-1}\Big[\exp\big(sZ_{t} - (e^s-1) \E_{t-1}[Z_t]\big)\Big] \leq 1 \,.
\]
Thus,
\begin{align*}
    \E\bigg[& \exp\Big(s\sum_{t=1}^T Z_t - (e^s-1)\sum_{t=1}^T\E_{t-1}[Z_t]\Big)\bigg] \\
      & = \E\bigg[ \E_{T-1} \Big[\exp\big(sZ_T - (e^s-1)\E_{T-1}[Z_T]\big) \Big] \\
      & \pushright{\exp\Big(s\sum_{t=1}^{T-1} Z_t - (e^s-1)\sum_{t=1}^{T-1}\E_{t-1}[Z_t]\Big)\bigg]} \\
      & \leq \E\bigg[\exp\Big(s\sum_{t=1}^{T-1} Z_t -  (e^s-1)\sum_{t=1}^{T-1}\E_{t-1}[Z_t]\Big)\bigg]
\end{align*}
By induction, we get 
\[
  \E\bigg[\exp\Big(s\sum_{t=1}^T Z_t -  (e^s-1)\sum_{t=1}^T\E_{t-1}[Z_t]\Big)\bigg] \leq 1 \,.
\]
We conclude thanks to Markov's inequality, with probability at least $1-\delta$
\[
   \sum_{t=1}^T Z_t \leq   \frac{e^s-1}{s} \sum_{t=1}^T \E_{t-1}[Z_t] + \frac{1}{s}\log(1/\delta)  \,.
\]
The final result is obtained by substituting $Z_t = X_t/B$ and by choosing $s=1$.
\end{proof}

\subsection{From cumulative regret to cumulative risk}
\label{sec:riskregret}
\begin{theorem}
  \label{thm:riskregret}
  Let $x>0$. Assume $\theta^\ast \in \cB_1(\theta_{\mathrm{center}},\epsilon)$. The cumulative risk of any convex optimization procedure in $\cB_1(\theta_{\mathrm{center}},\epsilon)$ satisfies, with probability $1-\delta$
    \begin{align*}
      \Risk_{1:T}
      & (\hat \theta_{0:(T-1)})  - \Reg_{1:T}(\hat \theta_{0:(T-1)}) \\
      & \leq  \epsilon \sqrt{2 \log\Big(\frac{2 + \log(T/2)}{2\delta}\Big)  \sum_{t=1}^T \big\|\nabla \ell_t(\hat \theta_{t-1})\|_\infty^2} \\
      & \pushright{+ \Big(\frac{1}{2} + \log\big(1 + \frac{1}{2}\log(T/2)\big) - \log \delta \Big) \epsilon B} \,,
    \end{align*}
  where $B \geq \max_{\theta \in \cB_1(\theta_{\mathrm{center}},\epsilon)} \big\|\nabla \ell_t(\hat \theta_{t-1})\|_\infty$ almost surely.
\end{theorem}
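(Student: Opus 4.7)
The strategy is to rewrite $\Risk_{1:T}(\hat\theta_{0:(T-1)}) - \Reg_{1:T}(\hat\theta_{0:(T-1)}) = \sum_{t=1}^T X_t$, where $X_t := \Risk(\hat\theta_{t-1}) - \bigl(\ell_t(\hat\theta_{t-1}) - \ell_t(\theta^\ast)\bigr)$ is a martingale difference w.r.t.\ $\mathcal{F}_t = \sigma(\ell_1,\dots,\ell_t)$ (using $\hat\theta_{t-1}\in\mathcal{F}_{t-1}$ and the i.i.d.\ structure of the losses). Convexity of $\ell_t$ together with $\hat\theta_{t-1},\theta^\ast\in\cB_1(\theta_{\mathrm{center}},\epsilon)$ and Hölder's inequality yield the one-sided bounds
\[
    \ell_t(\hat\theta_{t-1}) - \ell_t(\theta^\ast) \leq 2\epsilon \|\nabla\ell_t(\hat\theta_{t-1})\|_\infty =: 2\epsilon G_t, \qquad \ell_t(\theta^\ast) - \ell_t(\hat\theta_{t-1}) \leq 2\epsilon B,
\]
so that $|X_t|\leq 4\epsilon B$ and the conditional second moment admits a control of the form $\E_{t-1}[X_t^2] \lesssim \epsilon^2 \E_{t-1}[G_t^2]$ up to negligible $\Risk^2$-order corrections.

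The bulk of the proof is then a Bernstein--Freedman-type martingale inequality combined with a peeling / union-bound over dyadic levels of the variance. At a fixed level $V>0$, Bernstein yields with probability at least $1-\delta$
\[
    \sum_{t=1}^T X_t \leq \sqrt{2V\log(1/\delta)} + \tfrac{4}{3}\epsilon B \log(1/\delta)
\]
on the event $\{V_T \leq V\}$, where $V_T := 4\epsilon^2\sum_{t=1}^T\E_{t-1}[G_t^2]$. Covering the range $[0, 4\epsilon^2 B^2 T]$ of $V_T$ by roughly $1 + \tfrac{1}{2}\log(T/2)$ dyadic intervals of ratio $\sqrt{e}$ (this precise base is what produces the constant $\tfrac{1}{2}$ in the statement) and taking a union bound replaces the fixed $V$ by $V_T$ while introducing the $\log(1+\tfrac{1}{2}\log(T/2))$ penalty. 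Finally, the conditional sum $\sum\E_{t-1}[G_t^2]$ is replaced by the observed $\sum G_t^2$ by invoking Theorem~\ref{thm:martingale} on the bounded non-negative sequence $G_t^2\in[0,B^2]$, at the cost of an additive $B^2\log(1/\delta)$ term that is absorbed into the $\epsilon B \log$-part of the bound.

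The main technical obstacle is the variance control in the first step: only the upper convexity bound on the loss increment involves the empirical gradient norm $G_t$, while the reverse direction is bounded by the global constant $B$. A symmetric control would therefore only give $\E_{t-1}[X_t^2]\leq 4\epsilon^2 B^2$ and lose the sharpness in $\sum G_t^2$. The fix is to use a one-sided (or self-normalized, in the spirit of Peña--Lai--Shao) Bernstein argument that exploits only the side of the inequality involving $G_t$; careful bookkeeping of the constants through the peeling step then produces the precise logarithmic factors displayed in the statement.
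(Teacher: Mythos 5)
Your overall architecture (write $\Risk_{1:T}-\Reg_{1:T}$ as a martingale, apply a second-order deviation bound, and pay a $\log\log T$ price for not knowing the variance scale in advance) is the same in spirit as the paper's, but the paper executes it quite differently: it directly invokes the adaptive, self-normalized inequality of Theorem~4.1 of Wintenberger (2014), whose quadratic term is the \emph{observed} sum $\sum_t \eta_{t-1}\|\nabla\ell_t(\hat\theta_{t-1})\|_\infty^2$ for an adapted learning-rate sequence $\eta_t=\epsilon^{-1}\min\{B^{-1},c\Gamma/V_{t-1}\}$, and then controls that sum by the telescoping argument of Cesa-Bianchi--Mansour--Stoltz. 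There is no Freedman inequality, no predictable variance, and no explicit peeling; the $\log(1+\frac12\log(T/2))$ factor comes out of the $\log(1+\E[\log(\eta_1/\eta_T)])$ term of the adaptive bound. This difference is not cosmetic, and it is where your proposal has two genuine gaps. First, the variance control you need for Freedman, $\E_{t-1}[X_t^2]\lesssim\epsilon^2\,\E_{t-1}\big[\|\nabla\ell_t(\hat\theta_{t-1})\|_\infty^2\big]$, does not follow from convexity: you only get the one-sided bound $\ell_t(\hat\theta_{t-1})-\ell_t(\theta^\ast)\le 2\epsilon\|\nabla\ell_t(\hat\theta_{t-1})\|_\infty$, while the reverse direction is controlled by gradients at $\theta^\ast$ (or intermediate points), hence only by $2\epsilon B$. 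Consequently $\E_{t-1}[X_t^2]$ genuinely contains $\E_{t-1}\big[(\ell_t(\theta^\ast)-\ell_t(\hat\theta_{t-1}))_+^2\big]$, which is not dominated by $\epsilon^2\E_{t-1}[\|\nabla\ell_t(\hat\theta_{t-1})\|_\infty^2]$. Your proposed fix, a ``one-sided Bernstein,'' does not address this: the upper-tail Freedman exponent still involves the full predictable quadratic variation, not a one-sided surrogate. The self-normalized route the paper takes is precisely the device that sidesteps this, because its quadratic penalty is keyed to observed squared increments that admit a two-sided H\"older bound, rather than to a conditional variance.

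Second, the last step of your argument applies Theorem~\ref{thm:martingale} in the wrong direction. That lemma bounds the empirical sum by the conditional one, $\sum_t G_t^2\le(e-1)\sum_t\E_{t-1}[G_t^2]+B^2\log(1/\delta)$, whereas you need the converse, $\sum_t\E_{t-1}[G_t^2]\le C\sum_t G_t^2+B^2\log(1/\delta)$, to replace the predictable variance by the observed gradients. A lower-tail analogue for nonnegative bounded increments does exist (via $\E_{t-1}[e^{-sZ_t}]\le 1-(1-e^{-s})\E_{t-1}[Z_t]$), but it is not what you cite, it costs an additional union-bound slice of $\delta$, and it introduces a further multiplicative constant $e/(e-1)$. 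Together with the factor $2$ from the diameter of $\cB_1(\theta_{\mathrm{center}},\epsilon)$ and the peeling losses, this makes your claim of recovering the exact constants $\sqrt2$, $\tfrac12$ and $\log(1+\tfrac12\log(T/2))$ of the statement implausible; at best you would obtain the same bound up to universal multiplicative constants, and only after repairing the variance step, which is the real missing idea.
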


\begin{proof}
This is a consequence of Theorem 4.1 of \cite{Wintenberger2014}. Let $\delta \in (0,1)$ and  $(\eta_t)_{t\geq 0}$ be a sequence adapted to the filtration $(\cF_{t} = \{\ell_1,\dots,\ell_{t-1}\})_{t\geq0}$. Then, with the notation $\smash{\ell_{j,t}^2 \leq \epsilon^2 \|\nabla \ell_t(\hat \theta_{t-1})\|_\infty^2}$, applying Theorem 4.1 of \citet{Wintenberger2014}, we get that with probability $1-\delta$
   \begin{multline}
      R_T := \Risk_{1:T}(\hat \theta_{0:(T-1)}) - \Reg_{1:T}(\hat \theta_{0:(T-1)})  \\
       \leq   \epsilon^2 \sum_{t=1}^T \eta_{t-1} \|\nabla \ell_t(\hat \theta_{t-1})\|^2_\infty \\
        +  \frac{\log\Big(1+\E\big[\log(\eta_1/\eta_T)\big]\Big) - \log \delta}{\eta_T}\,,
        \label{eq:regret2risk}
   \end{multline}
   where $\Risk_{1:T}(\hat \theta_{0:(T-1)})  := \sum_{t=1}^T \E[\ell_t](\hat \theta_{t-1}) - \E[\ell_t](\theta^\ast)$ and $\Reg_{1:T}(\hat \theta_{0:(T-1)})  := \sum_{t=1}^T  \ell_t(\hat \theta_{t-1}) - \ell_t(\theta^\ast)$.
  
    We obtain the stated inequality from~\eqref{eq:regret2risk}, by properly setting the tuning parameters 
    \[
      \eta_{t} := \frac{1}{\epsilon} \min\left\{\frac{1}{B} , \frac{c \Gamma}{V_{t-1}}\right\}\,,
    \]
    where $c$ will be set by the analysis and
    \[
    \Gamma := \sqrt{\log\big(1+\log(\sqrt{T}/c)\big) - \log \delta}\,,
    \] 
    and
    \[
    V_{t-1} := \sqrt{\sum_{s=1}^{t-1} \big\|\nabla\ell_s(\hat \theta_{s-1})\big\|_\infty^2}\,.
    \]
    Indeed, first we use that that $\eta_1/\eta_T \leq \sqrt{T}/{c}$ so that $\E[\log (\eta_1/\eta_T)] \leq \log (\sqrt{T}/c)$. Then, similarly to the proof of \cite[Theorem 5]{CesaBianchiMansourStoltz2007}, we can show that the first term in the right-hand side of~\eqref{eq:regret2risk} is upper-bounded as
  \[
      \sum_{t=1}^T \eta_{t-1} \|\nabla \ell_t(\hat \theta_{t-1})\|^2_\infty \leq \frac{B}{2\epsilon} + \frac{c\Gamma}{2\epsilon}V_T.
  \]
  But, by definition of $\eta_T$, the second term is also controlled as
  \[
     \frac{\log\Big(1+\E\big[\log(\eta_1/\eta_T)\big]\Big) - \log \delta}{\eta_T} \leq \epsilon \Gamma  \max \left\{  B \Gamma, \frac{1}{c} V_T\right\}\,.
  \]
  Plugging these two last inequalities into~\eqref{eq:regret2risk} leads to
   \begin{equation*}
      R_T
      \leq \frac{B\epsilon}{2} + \frac{c \Gamma \epsilon}{2}V_T +  \epsilon \Gamma  \max \left\{  B \Gamma, \frac{V_T}{c} \right\}\,.
   \end{equation*}
   We then need to distinguish two cases 
   \begin{itemize}
    \item if $c \Gamma B \leq V_T$, then optimizing in $c = \sqrt{2}$
    \begin{equation*}
      R_T
      \leq \frac{B\epsilon}{2} + \Big(\frac{c}{2} + \frac{1}{c}\Big) \Gamma \epsilon V_T \leq  \frac{B\epsilon}{2} + \sqrt{2} \Gamma \epsilon V_T
    \end{equation*}
    \item if $c \Gamma B \geq V_T$, then
    \[
      R_T \leq \frac{B\epsilon}{2} + \frac{1}{\sqrt{2}} \Gamma \epsilon V_T + \epsilon B \Gamma^2 \,.
    \]
   \end{itemize}
   Therefore, putting the two cases together
   \[
      R_T \leq \frac{B\epsilon}{2} + \sqrt{2} \Gamma \epsilon V_T + \epsilon B \Gamma^2 \,.
   \]
   We conclude the proof by substituting $\Gamma$ and $V_T$ with their definitions.
\end{proof}

\end{document}